\newtheorem{thm}{Theorem}
\newtheorem{cor}{Corollary}
\newtheorem{rem}{Remark}
\newtheorem{lem}{Lemma}
\newtheorem{df}{Definition}
\renewcommand{\Re}{\operatorname{Re}}
\renewcommand{\Im}{\operatorname{Im}}
\newcommand{\sll}{\mathfrak{sl}}
\newcommand{\C}{\mathbb C}
\newcommand{\R}{\mathbb R}
\newcommand{\tG}{\tilde G}
\newcommand{\tP}{\tilde P}
\newcommand{\al}{\alpha}
\newcommand{\G}{\mathcal G}
\newcommand{\om}{\omega}
\renewcommand{\th}{\theta}
\newcommand{\Om}{\Omega}
\newcommand{\pp}{\mathfrak{p}}
\newcommand{\tom}{\tilde\om}
\newcommand{\g}{\mathfrak{g}}
\newcommand{\Mat}{\operatorname{Mat}}
\newcommand{\Hom}{\operatorname{Hom}}
\newcommand{\Ad}{\operatorname{Ad}}
\newcommand{\tr}{\operatorname{tr}}
\newcommand{\id}{\operatorname{id}}
\newcommand{\Z}{{\mathbb Z}}
\renewcommand{\gg}{{\mathfrak g}}
\newcommand{\I}{\operatorname{i}}
\newcommand{\Aut}{\operatorname{Aut}}
\newcommand{\GL}{\operatorname{GL}}
\newcommand{\PSU}{\operatorname{PSU}}
\newcommand{\SU}{\operatorname{SU}}
\newcommand{\gl}{\operatorname{{\mathfrak g\mathfrak l}}\,}
\newcommand{\su}{\operatorname{{\mathfrak s\mathfrak u}}\,}
\newcommand{\be}{{\beta}}
\def\sideremark#1{\ifvmode\leavevmode\fi\vadjust{
\vbox to0pt{\hbox to 0pt{\hskip\hsize\hskip1em
\vbox{\hsize3cm\tiny\raggedright\pretolerance10000
\noindent #1\hfill}\hss}\vbox to8pt{\vfil}\vss}}}
\begin{document}
\title{Free CR distributions}
\author{Gerd Schmalz and Jan Slov\'ak}
\date{}

\begin{abstract}
There are only some exceptional CR dimensions and codimensions such
that the geometries enjoy a discrete classification of the
pointwise types of the homogeneous models. The cases of CR
dimensions $n$ and codimensions $n^2$ are among the very few
possibilities of the so called parabolic geometries. Indeed, the
homogeneous model turns out to be $\PSU(n+1,n)/P$ with a suitable
parabolic subgroup $P$. We study the geometric properties of such
real $(2n+n^2)$-dimensional submanifolds in $\mathbb C^{n+n^2}$ for all
$n>1$. In
particular we show that the fundamental invariant is
of torsion type, we
provide its explicit computation, and we discuss an analogy to the
Fefferman construction of a circle bundle in the hypersurface type CR
geometry. 
\end{abstract}

\maketitle

\section{Introduction}

There is a vast amount of literature on analytical and geometrical aspects
of real submanifolds of complex spaces $\C^N$. The generic hypersurfaces in
$\C^{n+1}$, i.e. real $(2n+1)$-dimensional contact manifolds equipped with
complex structure on the contact distribution, represent the best known
example studied in detail for more than hundred years already. From the
geometrical point of view, the main reason for their nice and rich
structural behaviour lies in the algebraic properties of the Klein's
homogeneous model which is represented by the quadric $Q = \operatorname{PSU}(p+1,q+1)/P$
obtained from the standard action of $\operatorname{PSU}(p+1,q+1)$ on $\C^{p+q+2}$. The
space $Q$ itself coincides with the space of isotropic lines with respect to
the Hermitian form $h$ of signature $(p,q)$ and $P$ is the isotropic
subgroup of one such line.

The general case of CR geometries of CR dimension $n$ and codimension
$k$, i.e. real $2n+k$ surfaces in $\C^{2n+k}$ does not permit a similar
approach in general, but there are some exceptional dimensions and
codimensions which are very similar to the hypersurface case. These exceptional dimensions are $k=n^2$, $k=n^2-1$ for arbitrary $n>1$, $n=k=2$, $n=3, k=2$ and $n=3, k=7$.  The case $n=k=2$ was studied in \cite{SS1} and provides a beautiful way of viewing
real $6$-dimensional surfaces in $\C^4$. 
Nowadays, there is the general theory of parabolic geometries and the
originally surprising first example of a CR
geometry with a parabolic isotropy group in the semi-simple structure group
outside of the hypersurface type CR structures  provides a
quite easy example of
its applications, see Section 4.3 of \cite{CS09}.

In this paper we study another coincidence when the bracket generating
distribution inherited on a generic CR submanifold $M\subset \C^N$ allows
for only one type infinitesimal homogeneous model. The reason for this
exceptional behaviour is similar to the so called free $n$-dimensional
distributions studied intensively in geometric literature, cf. \cite{DS09}.
There the non-degeneracy of rank $n$ distribution in a space, where the
codimension is equal to the dimension of the space of all skew-symmetric
matrices, automatically leads to isomorphic Lie algebra structures on
associated graded tangent spaces at all points. In our case, the Levi form
is an imaginary part of a Hermitian form valued in the space of
skew-Hermitian matrices. Thus the codimension $n^2$ again coincides with the
dimension of the whole target space.
The paper \cite{DS09} has been also the main inspiration for most of the
algebraic technicalities here. This concerns in particular the Fefferman like
construction of a circle bundle equipped with a Hermitian analog of the
spinorial geometry for all
generic CR dimension $n$ and codimension $n^2$ geometries, in full analogy
to the hypersurface case.

Our approach is based on the recent theory of parabolic geometries, as
developed in \cite{CS09}, building itself on the Cartan-Tanaka theory, cf. 
\cite{tanaka, yamaguchi}.  This turns the quite deep problems on high
codimensional CR manifolds into rather straightforward applications of the
general methods and results (and opens new questions at the same time).  In
particular, the standard technique of the exterior differential systems
supported by the general results provides a very explicit and efficient
approach to the basic invariants.\medskip

{\bf Acknowledgements.} The research reflected in this paper has been supported
by the Czech Science Foundation, grant Nr. 201/08/0397. The first named author also gratefully acknowledges the support by Max-Planck-Institut f\"ur Mathematik in Bonn.  The authors
benefited from numerous discussions on the topic to M.G. Eastwood, B.
Doubrov, and A. Cap. The authors also wish to thank the referees for their comments that helped to improve the manuscript.

\section{The homogeneous model and Cartan connections}

Consider the $(2n+1)$-dimensional
complex space $\C^{2n+1}$ with coordinates
$(\epsilon_1,\dots,\epsilon_n,\zeta,\omega_1,\dots,\omega_n)$
endowed with the Hermitian form
$$
h=|\zeta|^2 + \sum_{\nu=1}^n \epsilon_\nu \bar{\omega}_\nu +
\omega_\nu \bar{\epsilon}_\nu
$$
of signature $(n+1,n)$. We shall write
$$
\Bbb J = \begin{pmatrix} 0 & 0 & I \\ 0&1&0 \\ I&0&0 \end{pmatrix}
$$
for the block matrix of this form in the standard coordinates on $\Bbb
C^{2n+1}$
(here $I=I_n$ is the unit matrix of rank $n$).

\subsection{The homogeneous quadric $Q$}

Let us consider the Grassmannian of $n$-dimensional complex subspaces of
$\C^{2n+1}$ and denote by $Q$ its subset consisting of the
isotropic subspaces with respect to $h$. By $\SU(n+1,n)$ we denote the
special pseudo-unitary group with respect to the Hermitian form
introduced above.

\begin{lem}\label{lem1}
$Q$ is a homogeneous CR-manifold of CR-dimension $n$ and
CR-codimension $n^2$ with rational transitive action of
$\SU(n+1,n)$. The kernel of the action is $\Z_{2n+1}$ and so the
effective homogeneous model is $Q=G/P$, where $G=\PSU(n+1,n)=
\SU(n+1,n)/\Bbb Z_{2n+1}$ and $P$ is the
isotropic subgroup of one fixed isotropic plane $V_0$ in $Q$.
\end{lem}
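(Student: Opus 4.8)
The plan is to realize $Q$ as a single orbit of the linear action on the Grassmannian, which gives smoothness and homogeneity for free, and then to read off the CR data and the kernel by an explicit computation at one convenient base point. I would take $V_0=\Span(e_1,\dots,e_n)$, the $\epsilon$-coordinate plane; from the formula for $h$ one checks immediately that $h|_{V_0}=0$, so $V_0\in Q$, and that $V_0$ is maximal isotropic, since the signature $(n+1,n)$ forces maximal isotropic subspaces to have dimension $\min(n+1,n)=n$.

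The first substantial step is transitivity. Witt's theorem for the Hermitian form $h$ gives that $U(n+1,n)$ acts transitively on the set of maximal isotropic subspaces, hence on $Q$. To descend to $\SU(n+1,n)$ I would use that the scalar matrices $\lambda\,\id$ with $|\lambda|=1$ lie in $U(n+1,n)$, act trivially on $Q$, and have determinant $\lambda^{2n+1}$, which surjects onto $U(1)=\det U(n+1,n)$; therefore $U(n+1,n)=\SU(n+1,n)\cdot\{\lambda\,\id\}$ and the two groups have the same orbits on $Q$. Thus $\SU(n+1,n)$ acts transitively and $Q$ is the smooth homogeneous space $\SU(n+1,n)/P$ with $P=\mathrm{Stab}(V_0)$. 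Rationality is then immediate, as in Pl\"ucker coordinates the induced action on $\Gr(n,2n+1)$ is given by projective polynomial maps.

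To obtain the CR dimension and codimension I would identify $T_{V_0}\Gr(n,2n+1)$ with $\Hom(V_0,\C^{2n+1}/V_0)$ and, using the splitting $\C^{2n+1}/V_0\cong\C\oplus V_0$ dual to the coordinates $(\zeta,\omega)$, write a first-order deformation of $V_0$ as a pair $(a,B)$ with $a\in\Hom(V_0,\C)$ (the $\zeta$-part) and $B\in\Mat(n;\C)$ (the $\omega$-part). Differentiating the isotropy condition $h|_{V_t}=0$ at $t=0$, the $\zeta$-contribution drops out (being quadratic in $t$) and the surviving condition is precisely $B+B^{*}=0$, i.e.\ $B$ skew-Hermitian. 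Hence $T_{V_0}Q$ consists of the pairs $(a,B)$ with $a\in\C^n$ arbitrary and $B$ skew-Hermitian, of real dimension $2n+n^2$, so $Q$ has codimension $n^2$ in the Grassmannian. Multiplication by $\I$ preserves the $a$-part but sends a skew-Hermitian $B$ to the Hermitian matrix $\I B$, transverse to $T_{V_0}Q$; consequently the maximal complex subspace of $T_{V_0}Q$ is exactly the $a$-part, of complex dimension $n$, giving CR-dimension $n$ and CR-codimension $(2n+n^2)-2n=n^2$.

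Finally I would determine the kernel. The scalar matrices $\lambda\,\id\in\SU(n+1,n)$ satisfy $|\lambda|=1$ and $\lambda^{2n+1}=1$, so they form $\Z_{2n+1}$ and act trivially on $Q$. For the converse I would argue in two steps: an element fixing every maximal isotropic subspace fixes every isotropic line $\ell$, because the maximal isotropics through $\ell$ correspond to those of the nondegenerate Hermitian space $\ell^{\perp}/\ell$ and therefore meet exactly in $\ell$; and a linear map fixing every isotropic line must be scalar, since the isotropic vectors span $\C^{2n+1}$ and a short additivity argument forces the eigenvalue to be constant. Hence the kernel is exactly the center $\Z_{2n+1}$, and $Q=G/P$ with $G=\PSU(n+1,n)$ as claimed. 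The main obstacle is the tangent-space computation: setting up the identification of $T_{V_0}Q$ correctly and verifying that the linearized isotropy condition is precisely skew-Hermiticity; the remaining points are either standard invocations (Witt's theorem) or short direct verifications.
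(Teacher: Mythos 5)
Your proof is correct, and it reaches the same conclusions by a genuinely different route in each of the three parts. For transitivity the paper constructs an adapted pseudo-unitary basis by hand (in effect re-proving the relevant case of Witt's theorem for this form), whereas you invoke Witt's theorem for $U(n+1,n)$ and descend to $\SU(n+1,n)$ via the scalar/determinant argument; that descent step is worth having explicitly, since the paper's basis construction a priori only yields a change of basis in $U(n+1,n)$ and leaves the determinant normalization implicit. For the CR data the paper works in the affine chart $(I,z,W)^T$ with defining equation $W+W^*+z^*z=0$ and reads the dimensions off the grading of $\su(n+1,n)$ --- machinery it needs anyway for the rest of the paper --- while you linearize the isotropy condition in $\Hom(V_0,\C^{2n+1}/V_0)$ and identify the CR distribution intrinsically as $T_{V_0}Q\cap J\,T_{V_0}Q$; your version is more self-contained and justifies the phrase ``CR-dimension $n$'' more explicitly than the paper does. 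For the kernel the paper computes inside the reductive part $G_0$ of $P$ (tacitly using that a kernel element lies in every point stabilizer, hence in $G_0$), whereas your argument --- fixing every maximal isotropic forces fixing every isotropic line, which forces scalarity --- is coordinate-free and complete, at the cost of the two standard facts you leave as sketches (Witt's theorem, and the additivity argument that a map fixing all isotropic lines is scalar, which uses that any two independent isotropic vectors span a plane containing a third isotropic line). Both approaches are sound; the paper's is better adapted to the explicit chart computations that follow, yours is cleaner as a stand-alone proof of the lemma.
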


\begin{proof}
Obviously, the standard action of $\SU(n+1,n)$ on $\C^{2n+1}$
induces an action on $Q$. We show that it is transitive. In order to do this
we construct a pseudo-unitary basis of $\C^{2n+1}$ adapted to a chosen fixed
plane $V\in Q$. Let $v_1,\dots, v_n$ be a basis of $V$. Then the
$n \times (2n+1)$ matrix
$(v_1,\dots, v_n)^* \Bbb J$
has rank $n$ and so we can find $n$
vectors $(w_1,\dots,w_n)$ such that
$$
(v_1,\dots, v_n)^* \Bbb J (w_1,\dots,w_n)= I_{2n+1}
.$$

The vectors $v_1,\dots, v_n,w_1,\dots,w_n$ are linearly
independent. Indeed, if
$$
\lambda_1 v_1+\cdots + \lambda_n v_n+\mu_1 w_1+\cdots+\mu_n w_n=0
$$
then multiplication of the left
hand side with $v_i$ yields $\mu_i=0$ and the $v_i$
were linearly independent by assumption. Next,
let us write $A=(A_{ij})$ for the Hermitian
Gram matrix of the basis $(w_i)$, that is $A_{ij}=w_i^*\mathbb J w_j$. If we replace the
basis $(w_i)$ by $(w_i- \frac12A(v_i))$, then
$$
h( v_i, v_j) =0, \quad h(v_i, w_j) =\delta_{i,j}, \quad h(w_i, w_j)
=0.
$$
Finally choose $w_{n+1}$ orthogonal to $v_1,\dots,
v_n,w_1,\dots,w_n$. According to the signature of the inner
product, it has positive length. By scaling we achieve that the
length is $1$.

For any isotropic planes $V$ and $V'$ we
can now find such bases and the cooresponding coordinate change is pseudo-unitary with respect to
$h$. On the other hand, every such basis corresponds to an isotropic subspace of
dimension $n$. Therefore, $Q$ can be viewed as the orbit of one isotropic space $V\in Q$
with respect to the holomorphic action of $\SU(n+1,n)$.
Since $Q$ is a real submanifold of the complex Grassmannian manifold $\SU(n+1,n)$ acts transitively on $Q$ by holomorphic automorphisms of the ambient Grassmannian it must be a CR-manifold.

In order to see the CR structure on $Q$ in detail, we shall pass to a local
chart of the Grassmannian. Let $V_0$ be spanned by the standard vectors
$e_\nu$, for $\nu=1,\dots,n$. In a neighbourhood of $V_0$ in the Grassmanian
of $n$-planes we
introduce coordinates $z_{\nu}, w_{\mu\nu}$ such that a subspace $V$ is
given as the span of
$$
\begin{pmatrix}1\\0\\\vdots\\0\\z_1\\w_{11}\\\vdots\\w_{1n}\\
\end{pmatrix},
\begin{pmatrix} 0\\1\\\vdots\\0\\z_2\\w_{21}\\\vdots\\w_{2n}\\
\end{pmatrix},\dots,
\begin{pmatrix}0\\0\\\vdots\\1\\z_n\\w_{n1}\\\vdots\\w_{nn}\\ \end{pmatrix}
= \begin{pmatrix} I\\ z \\ W\end{pmatrix},
$$
i.e. $V_0$ is given as $(I,0,0)^T$. An $n$-plane $V=(I,z,W)^T$ of this chart is isotropic if
\[(I,z^*,W^*)\mathbb J (I,z,W)^T=0\]
that is
\begin{equation}\label{charteq}
W+W^*+z^*z=0.
\end{equation}
Notice that (\ref{charteq}) is the defining equation of $Q$ in the chart.

$V$ is the image of $V_0$ under the linear mapping
\[\begin{pmatrix} I & 0 & 0\\ z &1 & 0 \\ W & z^* & I \end{pmatrix}\colon \mathbb C^{2n+1}\to \mathbb C^{2n+1}.\]
This mapping is in $\SU(n+1,n)$ and it is uniquely determined it we imposed the conditions that it keeps the first $n$ coordinates unchanged and the $n+1$-st coordinate unchanged modulo the first $n$ coordinates. 

Thus, using the following block structure for the matrices in the Lie group $G=\SU(n+1,n)$ or its Lie
algebra $\gg=\su(n+1,n)$, 
\begin{align*}
\begin{matrix}\hspace{9mm} n \hspace{9mm}&
1 & \hspace{9mm} n \hspace{9mm} \end{matrix}&\\ \begin{pmatrix}
\fbox{\rule[-3mm]{0cm}{10mm} \hspace{3mm} 0 \hspace{3mm}} &
\fbox{\rule[-3mm]{0cm}{10mm} 1} & \fbox{\rule[-3mm]{0cm}{10mm} \hspace{3mm}
2 \hspace{3mm}}\\ \rule{0cm}{.1mm}& \rule{0cm}{.1mm}&\rule{0cm}{.1mm}\\
 \fbox{ \hspace{2.3mm} -1
\hspace{2.3mm}} & \fbox{ 0} & \fbox{ \hspace{3mm} 1
\hspace{3mm}}\\
\rule{0cm}{.1mm}& \rule{0cm}{.1mm}&\rule{0cm}{.1mm}\\
\fbox{\rule[-3mm]{0cm}{10mm} \hspace{2.5mm} -2 \hspace{2.5mm}} &
\fbox{\rule[-3mm]{0cm}{10mm}  -1} & \fbox{\rule[-3mm]{0cm}{10mm}
\hspace{3mm} 0 \hspace{3mm}}
\end{pmatrix}& \quad
\begin{matrix}\rule[-3mm]{0cm}{10mm}  n\\\rule{0cm}{.1mm}\\
1\\\rule{0cm}{.1mm}\\\rule[-3mm]{0cm}{10mm}  n
\end{matrix}
\end{align*}
we may identify the latter chart with the exponential image of the lower
block triangular matrices and the action of the group $\SU(n+1,n)$ on $Q$ by CR-automorphisms is just the adjoint
action of $\SU(n+1,n)$.

In particular, the isotropy subgroup of the origin $V_0 = (I,0,0)^T$ is the parabolic
subgroup $P$ of all block upper triangular matrices in $G=\SU(n+1,n)$. In summary, we
have identified the CR-manifold $Q$ with the compact partial flag variety $Q=G/P$.

The Lie algebra $\gg=\su(n+1,n)$ of $\SU(n+1,n)$ enjoys the natural grading
$$
\gg=\gg_{-2}\oplus \gg_{-1}\oplus \gg_{0}\oplus \gg_{1}\oplus
\gg_{2}
$$
given by the block structure shown above. A simple computation reveals that $\gg_{-1}=\C^n$,
$\gg_{-2}$ consists of all skew-Hermitian matrices and the Lie algebra bracket is given
by $[X,Y]=X^*Y - Y^*X\in \mathfrak g_{-2}$ for $X,Y\in \mathfrak g_{-1}$. 

Then the block diagonal matrices in $\gg_0$ are of the form $(-C,
2i\operatorname{Im}\operatorname{Tr}C, C^*)$ with $C\in \gl(n,\Bbb C)$ arbitrary,
$\gg_1=\C^n$, and $\gg_2$ consists again of skew-Hermitian matrices.

Now, the natural complex structure on the Grassmanian of $n$-planes keeps
the subspace $\gg_{-1}$ invariant, turning it into the CR-distribution on $Q$
at the origin, while the $\gg_{-2}$ provides the remaining $n^2$ codimensions.
The holomorphic transitive action of $\SU(n+1,n)$ extends this CR structure
over the entire $Q$.

Finally, the elements in the reductive part $G_0\subset P$ are the
block diagonal matrices $(C^{-1}, \al, C^*)$ with $\al = (\operatorname{det}
C)^2|\operatorname{det}C|^{-2}$. Its action on $V_0$ reads (expressed in the
analogy to projective coordinates $(I:Z:W)^T$)
$$
\begin{pmatrix}
I\\Z\\W\end{pmatrix}
\mapsto \begin{pmatrix} C^{-1}& 0 & 0 \\
0 & \al & 0 \\ 0 & 0 & C^*\end{pmatrix}
\begin{pmatrix} I \\ Z \\ W \end{pmatrix}
 = \begin{pmatrix} C^{-1}\\ \al Z \\ C^* W\end{pmatrix} \simeq
\begin{pmatrix} I \\ \al Z C \\ C^* W C\end{pmatrix}
.$$
Thus the kernel of the action of $G$ on $Q$ consists of matrices $C = \be I$
with $|\be|=1$ and $\al=\be^{2n}$. The conclusion is $\be^{2n+1}=1$ and this
proves the last claim.
\end{proof}

The group $\SU(n+1,n)$ acts holomorphically on the Grassmannian and
preserves $Q$ and so it is a subgroup of $\Aut Q$. 

The expression of the general action of $\operatorname{exp}\g_1$ and
$\operatorname{exp}\g_2$ in the coordinate patch introduced above is given
by the formulae (for small $Y$ and $T$)
\begin{align*}
\begin{pmatrix}
I\\Z\\W\end{pmatrix}
\mapsto &\begin{pmatrix} I& Y & -\frac12Y^*Y \\
0 & 1 & -Y^* \\ 0 & 0 & I\end{pmatrix}
\begin{pmatrix} I\\ Z \\ W\end{pmatrix}
= \begin{pmatrix} I+YZ - \frac12Y^*YW \\ Z-Y^*W \\ W \end{pmatrix}
\\&\simeq
\begin{pmatrix} I \\ (Z-Y^*W)(I+YZ - \frac12Y^*YW)^{-1}
\\ W(I+YZ - \frac12Y^*YW)^{-1}\end{pmatrix}
\\
\begin{pmatrix}I \\ Z \\ W\end{pmatrix}
\mapsto &\begin{pmatrix} I& 0 & T \\
0 & 1 & 0 \\ 0 & 0 & I\end{pmatrix}
\begin{pmatrix} I\\ Z \\ W\end{pmatrix}
= \begin{pmatrix} I+TW \\ Z \\ W \end{pmatrix}
\simeq
\begin{pmatrix} I \\ Z(I+TW)^{-1}
\\ W(I+TW)^{-1}\end{pmatrix}
.\end{align*} The equivalent formulae for general automorphisms have been computed also
directly by the standard methods of complex analysis (see, e.g., \cite{ES94}). 
But we shall see that actually all automorphisms of $Q$
are of this form as a simple consequence of our general theory below.

Clearly, the case $n=1$ recovers the real $3$-dimensional hypersurfaces in
$\Bbb C^2$. The algebraic properties of the model are quite different for
this lowest dimensional case and we
shall treat only the other cases $n>1$ in the sequel.

\subsection{General facts on parabolic geometries}
Let us briefly remind some general concepts, the reader can find all details
in the monograph \cite{CS09}. 

The parabolic geometries can be
viewed as curved deformations of the homogeneous spaces $G/P$ with $G$
semisimple and $P$ parabolic:

\begin{df}\label{df1}
A {\em
Cartan geometry of type $(G,P)$ on a manifold $M$} is a principal fiber
bundle $\mathcal G\to M$ with structure group $P$, equipped with an absolute
parallelism $\om\in\Om^1(\mathcal G,\mathfrak g)$ which is $\Ad$-invariant
with respect to the principal $P$-action and reproduces the fundamental
vector fields. The form $\om$ is called the {\em Cartan connection} of type
$(G,P)$ on $M$.
\end{df}

Most general features of the geometry in question are read off from the algebraic
properties of the {\em flat model} $G\to G/P$, where $\om$ is the
Maurer-Cartan form. The name {\em parabolic geometry} refers to cases where
$G$ is semisimple and $P\subset G$ parabolic. 
At the level of the curved geometries, the $P$-invariant filtration
inherited on $T\G$ from the absolute parallelism projects to 
the filtration on
$TM$. Let us also notice that the Cartan-Killing form identifies $\pp_+ =
\gg_1\oplus\dots\oplus \gg_k$ with
$(\gg/\pp)^*$ as $P$-modules and $\gg/\pp$ equals
to $\gg_-=\gg_{-k}\oplus\dots\oplus\gg_{-1}$ as $
G_0$-module, where $G_0$ is
the reductive part of the parabolic subgroup $P$ with Lie algebra $\g_0$.

Roughly speaking, the entire Cartan connection can be mostly recovered from
this filtration on the manifold $M$ using suitable normalization conditions. 
Thus, a parabolic geometry on a manifold $M$ is given by a
particular geometric structure visible at the manifold itself, while $\G$
and $\om$ are uniquely determined by a functorial construction.

The structures in question are the so
called {\em regular infinitesimal flag structures of type $(G,P)$} and they 
are derived from the grading
$
\gg=\gg_{-k}\oplus \dots \oplus \gg_k
$
of the semisimple Lie algebra $\gg$
giving rise to the parabolic subalgebra $\pp = \gg_0\oplus\dots\oplus\gg_k$.
In fact, the analogues of the $P$--invariant filtration on 
$\gg_{-k}\oplus \dots \oplus \gg_{-1}$  have to be given in the individual
tangent spaces.

In our case, such structure is given by a 
non-degenerate distribution of
the right dimension and codimension, satisfying some additional conditions, 
and so the entire Cartan connection 
is constructed from these simple data by the general theory.
Thus, our main
technical step will be to
observe that generic real submanifolds of dimension $2n+n^2$ in $\C^{n+n^2}$
inherit at each point 
such an infinitesimal structure from the ambient complex space.

The structural information on the parabolic geometries is encoded neatly in
cohomological terms. The curvature form $\Om\in \Om^2(\G,\g)$
of the Cartan connection $\om$ is given by the structure equation
$$
\Om = d\om +\frac12[\om,\om]
$$
and the absolute parallelism identifies the curvature with the
curvature function
$$
\kappa:\G\to \wedge^2 \pp_+\otimes\g,\qquad \kappa(X,Y) =
K(\om^{-1}(X),\om^{-1}(Y))
.$$
Thus, the curvature function
has values in the cochains of the Lie algebra cohomology of $\g_-$ with
coefficients in $\g$. This cohomology is explicitly computable by the
Kostant's version of the BBW theorem, cf. \cite{silhan, CS09}. We may
compute it either by means of the standard differential $\partial$ or by its
adjoint co-differential $\partial^*$. The formula in the special case of
the above two-chains is
$$
\partial^*(Z_0\wedge Z_1\otimes X) = -Z_0\otimes [Z_1,X] + Z_1\otimes[Z_0,X]
-[Z_0,Z_1]\otimes X
.$$
The normalization procedure relies on another important property of
the parabolic geometries, which imposes conditions
on the behaviour of the filtrations and is called {\em
regularity}. In words, the filtrations have to respect the Lie brackets of
vector fields and coincide with the Lie algebra structure $\gg_-$ at the
graded level. In terms of the curvature, this says that no curvature
components of non-positive homogeneities are allowed,
cf. \cite[Section 3.1]{CS09}.

The general theory shows that normalizing the regular Cartan connections by the
co-closedness $\partial^*\kappa=0$ of the curvature defines an equivalence of categories
of certain filtered manifolds (with additional simple geometric structures under some
cohomological conditions, like for all $|1|$-gradings or contact gradings) and
categories of Cartan connections, cf. \cite[Section 3.1]{CS09}. Then the harmonic part of
the curvature (which is a well defined quotient) defines all the rest and, in particular,
the geometry is locally isomorphic to its flat model if and only if the harmonic
curvature vanishes. Moreover, the entire curvature tensor is computable explicitly by a
natural differential operator from its harmonic part, cf. \cite{cap-twistor}.

\subsection{Free CR-distributions}

Let us come back to our main example, the structures modelled over the
parabolic homogeneous space $G/P$ with $G=\PSU(n+1,n)$, $n>1$, and $P$ as above.
In general, the first cohomology $H^1(\g_-, \g)$ governs the amount of data determining
the regular infinitesimal structures, cf. \cite[Section 4.3]{CS09}.
Indeed, the distribution itself
encodes the entire geometry if the first cohomology $H^1(\g_-, \g)$ appears
only in negative homogeneities. A direct computation checks that this
happens in our case.

Finally, we have to find out the conditions for the regularity, which is
more sophisticated.  At the first glance it seems that the complex structure
on $\g_{-1}$ and the choice of the reductive part $G_0$ of $P$ should be an
important part of the geometric data, but the above mentioned cohomological
computation says that this cannot be true.  The reason is given by the lemma
below which uses the terminology of a totally real skew-symmetric form. 
Recall that a real skew-symmetric bilinear form $\omega$ on a complex space
$V$ with complex structure $J$ is called totally real if
$\omega(JX,JY)=\omega(X,Y)$.

\begin{lem}\label{lem2}
The only complex structures $\tilde J$ on the real
vector space $\g_{-1}$ which make the Lie bracket $\Lambda ^2\g_{-1}\to
\g_{-2}$ into a totally real skew-symmetric form valued in skew
Hermitian matrices are $\tilde J=\pm J$,
where $J$ is the standard complex structure on $\g_{-1}$.
Moreover, all linear
homomorphisms $A\in GL(\g_{-1})$ allowing an extension $\tilde A$
to a Lie algebra
automorphism of $\g_-$ are
complex linear or complex anti-linear.
\end{lem}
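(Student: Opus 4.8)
The plan is to reduce the second assertion to the first, and to prove the first by a rank criterion for the bracket. Throughout I use the explicit model from the proof of Lemma~\ref{lem1}: under the identifications $\g_{-1}\cong\C^n$, with its standard complex structure $J$ (multiplication by $\I$), and $\g_{-2}\cong\u(n)$ (skew-Hermitian $n\times n$ matrices), the bracket of Lemma~\ref{lem1} becomes the surjective skew-symmetric map $[X,Y]=XY^*-YX^*$. A one-line computation gives $[JX,JY]=[X,Y]$, so $J$, and likewise $-J$, does make the bracket totally real; the whole point is the converse.

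For the second assertion, suppose $A\in\GL(\g_{-1})$ extends to a Lie algebra automorphism of $\g_-$. Since $\g_{-2}=[\g_-,\g_-]$ is preserved by any automorphism, we obtain an induced map $B$ on $\g_{-2}$ with $[AX,AY]=B[X,Y]$ for all $X,Y$. I would then set $\tilde J:=AJA^{-1}$, which satisfies $\tilde J^2=-\id$. Using the automorphism property together with the total reality of $J$, one finds $[\tilde JX,\tilde JY]=B[JA^{-1}X,JA^{-1}Y]=B[A^{-1}X,A^{-1}Y]=[X,Y]$, so $\tilde J$ meets the hypotheses of the first assertion. Granting it, $AJA^{-1}=\pm J$, i.e. $A$ commutes or anticommutes with $J$, which is exactly $\C$-linearity or $\C$-antilinearity. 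Thus everything reduces to the first assertion.

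For the first assertion the key is a rank criterion, established by a short Cauchy--Schwarz computation: the matrix $[X,Y]=XY^*-YX^*$ sends $Z$ to $(Y^*Z)X-(X^*Z)Y$, so it has rank $2$ when $X,Y$ are complex-linearly independent and rank $\le 1$ otherwise. Let now $\tilde J$ be a complex structure with $[\tilde JX,\tilde JY]=[X,Y]$. Because the bracket is preserved exactly, $\tilde J$ preserves the rank of bracket values. Taking $Y=JX$, for which $[X,JX]=-2\I XX^*$ has rank one, this forces $\tilde JX$ and $\tilde J(JX)$ to be complex-linearly dependent for every $X$; as $\tilde J$ is an $\R$-linear bijection, this says precisely $\tilde J(\C X)=\C(\tilde JX)$, i.e. $\tilde J$ carries complex lines to complex lines. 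Here $n>1$ is essential: an $\R$-linear bijection of $\C^n$ with $n\ge 2$ preserving every complex line is $\C$-linear or $\C$-antilinear (writing $\tilde J(\I X)=c\,\tilde JX$ with $c$ forced to be a constant by independence of pairs, and $c=\pm\I$ by $\R$-linearity).

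It then remains to treat the two cases. In the antilinear case $\tilde J=N\circ(\text{conjugation})$, the hypothesis reads $N\overline{[X,Y]}\,N^*=[X,Y]$, that is $N\bar S N^*=S$ for all $S\in\u(n)$; taking $S=\I I$ gives $NN^*=-I$, impossible as $NN^*$ is positive semidefinite. In the linear case $\tilde J=M$, the hypothesis reads $MSM^*=S$ for all $S\in\u(n)$; taking $S=\I I$ gives $MM^*=I$, so $M$ is unitary and commutes with every skew-Hermitian matrix, hence with all of $\gl(n,\C)$, so $M=\lambda I$ with $|\lambda|=1$; finally $\tilde J^2=-\id$ forces $\lambda^2=-1$, i.e. $\tilde J=\pm J$. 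The main obstacle is the rank criterion and the passage from rank preservation to preservation of complex lines; this is the geometric heart of the argument and the only place where $n>1$ is used, while the semilinearity dichotomy, the two matrix computations, and the reduction of the second assertion are then routine.
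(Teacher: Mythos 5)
Your proof is correct, and for the main (first) assertion it takes a genuinely different route from the paper's. The paper first observes that total reality of the bracket is equivalent to the matrix product $J\om$ being symmetric for every skew-symmetric $\om$ in the span of the $n^2$ component forms of the bracket (which is exactly the space of all $J$-invariant skew forms), and then works in the complexification of $\g_{-1}$: in the $\pm\I$-eigenbasis of $J$ these forms are precisely the block anti-diagonal matrices $\left(\begin{smallmatrix}0&B\\-B^t&0\end{smallmatrix}\right)$, and demanding that $\tilde J\om$ be symmetric for all $B$ forces the blocks of $\tilde J=\left(\begin{smallmatrix}P&Q\\R&S\end{smallmatrix}\right)$ to satisfy $Q=R=0$ and $P=-S=\lambda I$, whence $\lambda=\pm\I$. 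You instead exploit the concrete values of the bracket: the rank of $XY^*-YX^*$ detects complex-linear dependence of $X$ and $Y$, so a bracket-preserving $\tilde J$ must send complex lines to complex lines, hence (for $n\ge2$) is $\C$-linear or $\C$-antilinear, and the two cases are then dispatched by the identities $MSM^*=S$, respectively $N\bar S N^*=S$, holding on all of $\u(n)$. The reduction of the second assertion to the first by conjugating $J$ with $A$ is essentially identical in both treatments. Your route is more geometric, yields the stronger byproduct that any real-linear bijection preserving the bracket exactly is a unimodular scalar, and makes completely explicit where $n>1$ enters (the paper uses it more implicitly, in forcing $Q=R=0$); the paper's route is a shorter direct linear-algebra computation that avoids the semilinearity lemma for line-preserving real-linear maps. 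One small point to make explicit: in passing from ``$MSM^*=S$ for all bracket values'' to ``for all $S\in\u(n)$'' you use that the brackets span $\u(n)$; this is true and is exactly the non-degeneracy the paper records as linear independence of the $n^2$ component forms, but it deserves a sentence.
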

\begin{proof}
Let us observe that the Lie bracket $\Lambda^2\g_{-1}\to\g_{-2}$ is actually
formed by $n^2$ ordinary skew-symmetric bilinear forms which are linearly
independent. Thus being totally real means that every skew-symmetric bilinear
form $\om\in\Lambda^2\g_{-1}^*$ in the span of the above ones 
has to be invariant with respect to the
endomorphism $J$. Now, considering both $\om$ and $J$ as matrices in a
chosen basis, this is equivalent to the property that the matrix composition
$J^T\om$ is symmetric for all skew-symmetric matrices $\om$ in question, 
thus $J\om$ is always symmetric, too.
  
Let $\tilde{J}$ be a real endomorphism of $\mathfrak g_{-1}$ with
$\tilde{J}^2=-\id$ and such that any $J$-invariant skew-symmetric bilinear
form $\omega$ is also $\tilde{J}$ invariant.  Thus, we request 
$\tilde{J}\omega$ symmetric for any skew-symmetric $\omega$ such
that $J\omega$ is symmetric.

Let $\C^{2n}$ be the complexification of $\R^{2n}=\C^n$ with coordinates
such that 
$$J=\begin{bmatrix}\I\ I&0\\0&-\I\ I \end{bmatrix}
.$$ 
Then $J\omega$ is symmetric for skew-symmetric $\omega$ if and only if
$$
\omega=\begin{bmatrix} 0 & B\\ -B^t &0 \end{bmatrix}$$ 
for some $n\times n$
matrix $B$.  

Now let 
$$
\tilde{J}=\begin{bmatrix}P&Q\\R&S \end{bmatrix}
$$ 
with
$\tilde{J}^2=- I$.  Then $\tilde{J}\omega$ is
$$
\begin{bmatrix}-QB^t&PB\\-SB^t&RB \end{bmatrix}
$$ 
which is symmetric for any
$B$ if and only if $Q=R=0$ and $P=-S=\lambda I$.  From
$\tilde{J}^2=- I$ we get $\lambda=\pm \I$ as required.

Finally, if $A$ in $\GL(\g_{-1})$ allows an extension into a morphism of the
Lie algebra $\g_-$, then
$$
[A^{-1}JAX, A^{-1}JAZ] =
\tilde A^{-1}\cdot [JAX,JAZ] = \tilde A^{-1}\cdot \tilde A[X,Z] = [X, Z]
$$
and so $A^{-1}JA = \pm J$ by the conclusion above.
\end{proof}

Now, we are led to the definition of the main objects of our
considerations:

\begin{df}
Consider a smooth manifold $M$ of real dimension $2n+n^2$ equipped with
a $2n$-dimensional distribution $D=T^{-1}M\subset TM$, such that
$[D,D]=TM$. We call $D$ a free CR distribution of dimension $n$
on $M$ if there is a fixed
almost complex structure $J$ on $D$ such that the algebraic
Lie bracket $\mathcal L:D\wedge D\to TM/D$ is totally real.
\end{df}

Let us remind, that if such a $J$ exists, then it is
unique up to the sign in view of Lemma 2. Thus we have to understand the
condition as a quite severe restriction on the distribution $D$ only. In
fact the existence of the second cohomology $H^2(\g_-, \g)$ of homogeneity
zero indicates, that actually we have to expect, that the generic distributions
of the proper dimension and codimension will not possess such a $J$. We
shall even see that if the dimension is bigger than two, then each such
$J$ has to be integrable (which can be understood as 
a differential consequence of our
requirement that the curvature vanishes in homogeneity zero). 

But clearly, the requirements of Definition 2 do not represent any problem 
for the
embedded CR-structures $M\subset \Bbb C^{n+n^2}$, since the bracket as well
as the complex structures are inherited from the ambient complex space.

\begin{lem}\label{lem3}
Every free CR distribution of dimension $n$ provides a regular
infinitesimal flag structure of type $(\PSU(n+1,n),P)$ on the
$(2n+n^2)$-dimensional manifold $M$.
\end{lem}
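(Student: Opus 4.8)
The plan is to check directly that the two pieces of data making up a regular infinitesimal flag structure of type $(\PSU(n+1,n),P)$ are supplied by a free CR distribution: a regular filtration of $TM$ whose symbol algebra is everywhere isomorphic to $\g_-=\g_{-2}\oplus\g_{-1}$, together with a reduction of the bundle of graded frames to the reductive group $G_0$. Everything at this stage is pointwise and algebraic, so only $J$ and the algebraic Levi bracket $\mathcal L$ enter and integrability of $J$ plays no role. First I would set the two--step filtration $T^{-2}M=TM\supset T^{-1}M=D\supset 0$. Because the filtration has only two steps and $[D,D]=TM$, compatibility of the filtration with the Lie bracket of vector fields is automatic, and the induced Levi bracket on the associated graded is precisely the algebraic tensor $\mathcal L\colon D\wedge D\to TM/D$. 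Thus the whole statement reduces to the two claims treated below.

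The crux is to show that the symbol $\operatorname{gr}(T_xM)=D_x\oplus T_xM/D_x$ at each point is isomorphic, as a graded nilpotent Lie algebra, to $\g_-$; this is where the word \emph{free} does its work. The space of totally real skew-symmetric real $2$-forms on $(\C^n,J)$ is exactly the space of real $(1,1)$-forms, of real dimension $n^2$, which matches $\dim_\R(T_xM/D_x)=n^2$. Since $\mathcal L$ is totally real and, by $[D,D]=TM$, surjective, the component forms $\xi\circ\mathcal L$ with $\xi$ ranging over $(T_xM/D_x)^*$ span the entire space of totally real $2$-forms, so that $\ker\mathcal L$ equals the common annihilator of all such forms. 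The model bracket $[X,Y]=X^*Y-Y^*X$ has the very same features -- it is totally real, valued in the $n^2$-dimensional space of skew-Hermitian matrices, and surjective -- hence has the identical kernel. Choosing any $J$-complex-linear isomorphism $\phi\colon D_x\to\g_{-1}$ and comparing kernels then yields a unique linear isomorphism $\psi\colon T_xM/D_x\to\g_{-2}$ with $\psi\circ\mathcal L=[\phi\,\cdot\,,\phi\,\cdot\,]$, so that $(\phi,\psi)$ is a graded Lie algebra isomorphism onto $\g_-$. This proves that the filtered manifold is regular with symbol $\g_-$.

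It then remains to produce the $G_0$-reduction. The almost complex structure $J$ singles out the complex-linear graded frames, i.e. those restricting to a $\C$-linear isomorphism $\g_{-1}\to D_x$, and any two such differ by a graded automorphism of $\g_-$ that is complex linear on $\g_{-1}$. By Lemma \ref{lem2} the graded automorphisms of $\g_-$ are exactly the complex linear and the complex anti-linear ones, and the complex linear ones constitute precisely the image of the adjoint action of $G_0$ (as read off from the description of $G_0$ in Lemma \ref{lem1}). Consequently the $J$-adapted graded frames form a principal $G_0$-subbundle along the adjoint homomorphism $G_0\to\operatorname{Aut}(\g_-)$, which is the required reduction. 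Packaging this reduction together with the regular filtration yields a regular infinitesimal flag structure of type $(\PSU(n+1,n),P)$ by the general correspondence of \cite[Section 4.3]{CS09}.

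The main obstacle is the dimension count in the second paragraph: one must be certain that the totally real skew-symmetric forms on $\C^n$ fill out an $n^2$-dimensional space coinciding with the codimension, because only then is the surjective totally real $\mathcal L$ forced, after a frame choice, to agree with the universal (free) bracket of the model and thus to give the same symbol algebra at every point. The remaining points -- automaticity of regularity for a two-step bracket-generating filtration and the identification of the structure group with $G_0$ -- are then routine consequences of $[D,D]=TM$ and of Lemma \ref{lem2}, which is exactly what guarantees that the structure group is $G_0$ and nothing larger.
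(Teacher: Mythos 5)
Your proof is correct and follows essentially the same route as the paper: both reduce the claim to the pointwise statement that a surjective, totally real Levi bracket valued in an $n^2$-dimensional quotient is forced to be isomorphic to the model bracket of $\g_-$, with regularity automatic for a depth-two bracket-generating filtration. You merely make explicit two points the paper compresses — the kernel-comparison/universality argument behind "must be equivalent to the standard form" and the identification of the $G_0$-reduction via Lemma \ref{lem2} — and both are carried out correctly.
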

\begin{proof}
We have mentioned the properties of infinitessimal flag structures above 
and in the case
of free CR--geometries we just want to show, that each associated graded
tangent space, together with the Levi bracket $\mathcal
L$, is isomorphic to the the Lie algebra $\gg_{-2}\oplus \gg_{-1}$.
 
Let us consider a fixed tangent space $T_xM$ and the CR subspace
$D_x\subset T_xM$ and write $J$ for the complex structure on $D_x$. The
requirement on the Levi bracket $\mathcal L(JX,JY) = \mathcal L(X,Y)$ 
says that
$\mathcal L$ is the imaginary component of a (vector valued)
Hermitian form $\tilde {\mathcal L}:\Bbb C^n\otimes\Bbb C^n\to \Bbb C^{n^2}$
(the classical Levi form).
The non-degeneracy condition $[D,D]=TM$ ensures that this Hermitian form 
must be equivalent to the standard form mentioned in Lemma \ref{lem2}. Thus
identifying $\operatorname{Gr}(T_xM) = D_x\oplus (T_xM/D_x)$
with $\g_- = \g_{-1}\oplus\g_{-2}$
as graded Lie algebras is just what we need. 

The regularity follows by construction.
\end{proof}

\begin{thm}\label{thm1}
For each free CR distribution of dimension $n>1$ on a manifold $M$, there
is the unique regular normal Cartan connection
of type $(G,P)$ on $M$ (up to isomorphisms).

The only fundamental invariants of free CR distributions of dimensions $n>2$
are concentrated in the curvature of homogeneity degree $1$ and
correspond to the totally trace-free part of the $\sll(n,\C)$-submodule
$\Hom(\g_{-1}\otimes\g_{-2},\g_{-2})$ in the torsion.

In the case $n=2$, the same fundamental invariant exists and, additionally,
there is the Nijenhuis tensor $N(X,Y)=[X,Y]-[JX,JY]+J([JX,Y]+[X,JY])$ on the distribution, which vanishes automatically
on the embedded real eight-dimensional manifolds $M$ in $\Bbb C^6$.

Moreover, every smooth map between two free CR distributions respecting the
distributions is either a CR morphism or a conjugate CR morphism on the
connected components of $M$.
\end{thm}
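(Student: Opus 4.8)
The plan is to derive all four statements from the standard machinery of parabolic geometries applied to the data already assembled in Lemmas \ref{lem2} and \ref{lem3}; the only genuinely computational ingredient is the Kostant decomposition of $H^2(\g_-,\g)$.

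For the first paragraph I would simply invoke the equivalence of categories between regular infinitesimal flag structures and regular normal Cartan connections, \cite[Section 3.1]{CS09}. Lemma \ref{lem3} says that a free CR distribution is precisely such a regular infinitesimal flag structure of type $(\PSU(n+1,n),P)$, and we have already remarked that $H^1(\g_-,\g)$ sits only in negative homogeneities, so the underlying filtered manifold carries no further reducible data and the distribution alone pins down the normal Cartan connection up to isomorphism. This paragraph should cost nothing beyond quoting the theorem.

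The core of the argument is the evaluation of $H^2(\g_-,\g)$ by Kostant's version of the Bott--Borel--Weil theorem, \cite{silhan,CS09}. I would enumerate the Weyl group elements of length two attached to the two crossed nodes that define the $|2|$-grading of $\sll(2n+1,\C)$, compute the associated lowest weights via the affine dot-action, and sort the resulting irreducible $\g_0$-modules by homogeneity. Normality $\partial^*\kappa=0$ places the harmonic curvature exactly in these classes, while regularity discards everything of non-positive homogeneity. For $n>2$ I expect a single surviving class in homogeneity one, realised as the totally trace-free part of the $\sll(n,\C)$-module $\Hom(\g_{-1}\otimes\g_{-2},\g_{-2})$; as this is the whole harmonic curvature and is of torsion type, the absence of any integrability class in $H^2$ forces the complex structure $J$ to be integrable, which I would extract as a differential consequence of the vanishing of the low-homogeneity curvature. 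For $n=2$ the module theory degenerates and one further homogeneity-one class appears; I would match its associated tensor with the Nijenhuis expression $N(X,Y)=[X,Y]-[JX,JY]+J([JX,Y]+[X,JY])$ by comparing the cocycle directly with this bracket, and note that for an embedded $M\subset\C^{6}$ the structure $J$ is the restriction of the ambient complex structure, hence integrable, so $N\equiv 0$.

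For the final assertion I would argue by functoriality. A smooth map respecting the distributions induces, on each tangent space, a homomorphism of the symbol algebras $\g_-$; non-degeneracy of the Levi bracket forces it to be an isomorphism on $\g_{-1}$ intertwining the brackets into $\g_{-2}$, so by the second part of Lemma \ref{lem2} it must be complex linear or complex anti-linear. Since these two alternatives are closed and mutually exclusive, the type is locally constant, hence constant on each connected component, yielding a CR morphism or a conjugate CR morphism respectively.

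I expect the main obstacle to be the Kostant bookkeeping: organising the length-two Weyl elements and the dot-action so that the homogeneities and the $\sll(n,\C)$-module structure come out correctly, and in particular isolating the exceptional extra class at $n=2$ and identifying it, as an explicit tensor, with the Nijenhuis operator. Everything else reduces to quoting the general equivalence and applying Lemma \ref{lem2}.
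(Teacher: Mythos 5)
Your proposal follows essentially the same route as the paper: the first claim is quoted from the equivalence of categories in \cite[Section 3.1]{CS09}, the harmonic curvature is identified via Kostant's computation of $H^2(\g_-,\g)$ (two conjugate pairs of components, one discarded by regularity for $n>2$ and surviving as the Nijenhuis class in homogeneity one for $n=2$), and the morphism statement is reduced to the second part of Lemma \ref{lem2} exactly as the paper does. The paper likewise defers the actual Kostant bookkeeping to \cite{CS09,silhan}, so your plan is a faithful, correct reconstruction of its proof.
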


\begin{proof}
The existence and uniqueness
of the Cartan connection follow from the general theory (see
\cite[Section 3.1]{CS09}). For
the explicit computation of the curvature see \cite[Section 4.3]{CS09}
or follow the algorithm of Kostant, see \cite{CS09, silhan}. In
particular, it turns out that there are two couples of complex conjugate
components for the complexified cohomologies. 

If $n>2$, one of them appears
in homogeneity zero and the corresponding part of the curvature is
excluded by the regularity condition already. The other one corresponds just
to the submodule as described in the Theorem. 

If $n=2$, both pairs of components appear
in homogeneity one and the additional one is generated (on the complexified
tangent space) by cochains mapping
two holomorphic elements to the anti-holomorphic image and vice versa. Thus
this part of the curvature expresses the Nijenhuis tensor
of the complex structure on $D$, see \cite[Section 4.2]{CS09} for an
analogous argumentation in the hypersurface CR case.

Finally, any smooth map respecting the distributions is a homomorphism of
the underlying infinitesimal structures, up to a possible change of the sign
of the chosen complex structures. Thus, such a map must be a morphism of the
unique normal Cartan connections on the connected components of $M$.
\end{proof}

\begin{cor}\label{cor1}
A generic real submanifold $M$ of real dimension $2n+n^2$ in $\Bbb
C^{n+n^2}$, $n>1$, inherits the free CR distribution 
structure from the ambient
complex structure. In particular, there is the canonical Cartan connection
for $M$, and $M$ is locally CR isomorphic to the homogeneous quadric 
$Q$ if and only if the
fundamental invariant torsion described in the Theorem vanishes.
\end{cor}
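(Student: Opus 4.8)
The plan is to check that a generic embedded $M$ literally satisfies Definition 2, and then to let Lemma \ref{lem3}, Theorem \ref{thm1}, and the general parabolic machinery do the remaining work. First I would treat the pointwise linear algebra. At a point $x\in M$ the maximal complex subspace $D_x=T_xM\cap J(T_xM)$ for the ambient complex structure $J$ satisfies the dimension count $\dim_{\R}D_x\ge 2\dim_{\R}T_xM-\dim_{\R}\C^{n+n^2}=2(2n+n^2)-2(n+n^2)=2n$, so $\dim_{\C}D_x\ge n$, and since the intersection dimension is upper semicontinuous this minimal value is attained on an open dense set. I interpret \emph{generic} as the (open, dense) requirement that $\dim_{\C}D_x=n$ holds at every point, so that $D=T^{-1}M$ is an honest CR distribution of CR dimension $n$ and CR codimension $n^2$, equipped with the inherited almost complex structure $J$.

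Next I would record that the freeness conditions of Definition 2 hold. The total reality $\mathcal L(JX,JY)=\mathcal L(X,Y)$ is automatic for any embedded CR manifold, since the Levi bracket is the imaginary part of the ambient (vector-valued) Hermitian second fundamental form and is therefore $J$-invariant; this is precisely the observation already used in the proof of Lemma \ref{lem3}. The remaining condition $[D,D]=TM$ is again an open dense, hence generic, requirement on the second order jet of $M$, and this is where the exceptional dimensions enter: the space of totally real skew forms on $D_x$ has real dimension $n^2$, equal to $\dim_{\R}(T_xM/D_x)$, so non-degeneracy forces the Levi bracket to surject onto the full space of such forms and hence to be linearly equivalent to the standard bracket of $\g_{-2}\oplus\g_{-1}$. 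Thus a generic $M$ carries a free CR distribution, and Lemma \ref{lem3} converts this into a regular infinitesimal flag structure of type $(\PSU(n+1,n),P)$; Theorem \ref{thm1} then furnishes the unique regular normal Cartan connection, which is the asserted canonical connection.

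For the final equivalence I would invoke the general theory: a regular normal parabolic geometry is locally isomorphic to its flat model $G/P=Q$ precisely when its harmonic curvature vanishes, and the whole curvature is then recovered from this harmonic part. By Theorem \ref{thm1} the harmonic curvature for $n>2$ consists only of the fundamental torsion described there; for $n=2$ it also contains the Nijenhuis tensor $N$, but $N$ vanishes identically for $M\subset\C^{6}$ because a CR structure inherited from an integrable ambient complex structure is itself integrable, exactly as asserted in Theorem \ref{thm1}. Hence for embedded $M$ with $n>1$ the harmonic curvature reduces to the fundamental torsion. If that torsion vanishes, the harmonic curvature vanishes, the Cartan connection is flat, and the equivalence of categories expressed by the morphism statement of Theorem \ref{thm1} identifies $M$ locally with $Q$ as CR manifolds. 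Conversely, a local CR isomorphism $M\to Q$ pulls back the Maurer-Cartan (flat) connection of $Q$ to a regular normal Cartan connection on $M$, which by the uniqueness in Theorem \ref{thm1} is the canonical one, so its curvature and in particular its torsion vanish.

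I expect the only genuinely delicate step to be the pointwise claim that generic non-degeneracy \emph{already} forces equivalence with the standard free model, i.e. the passage from $[D,D]=TM$ to $\Gr(T_xM)\cong\g_{-2}\oplus\g_{-1}$ as graded Lie algebras; this is exactly the dimension coincidence exploited in Lemma \ref{lem3}, so once that lemma is available the corollary becomes a bookkeeping assembly of Lemma \ref{lem3}, Theorem \ref{thm1}, and the standard local-flatness criterion for parabolic geometries, with the one extra input being the automatic vanishing of the Nijenhuis tensor for embeddings in the case $n=2$.
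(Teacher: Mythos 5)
Your proposal is correct and follows essentially the same route as the paper's (much terser) proof: verify that a generic embedded $M$ satisfies Definition~2 --- CR dimension $n$ by the dimension count, total reality of the Levi form from the ambient complex structure, non-degeneracy as a generic condition --- and then let Lemma~\ref{lem3}, Theorem~\ref{thm1}, and the general local-flatness criterion for regular normal parabolic geometries finish the argument. The extra care you take with the $n=2$ case (automatic vanishing of the Nijenhuis tensor for embedded manifolds) and with the converse direction via pullback of the Maurer--Cartan form is exactly what the paper leaves implicit in ``the Corollary follows.''
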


\begin{proof}
On a generic submanifold $M$ of the given dimension, the CR structure has
got CR dimension $n$ and its Levi form is totally real. Thus, the inherited complex
structure $J$ satisfies the assumptions on the free CR distributions and the
Corollary follows.
\end{proof}

\begin{cor}\label{cor2}
The group of all automorphisms of the homogeneous quadric $Q$ is
$\PSU(n+1,n)$.
\end{cor}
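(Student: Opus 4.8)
The plan is to deduce this from the rigidity of the flat model in the general theory of parabolic geometries, using the equivalence of categories established in Theorem \ref{thm1}. The quadric $Q=G/P$ with $G=\PSU(n+1,n)$ is precisely the homogeneous (flat) model of our free CR distribution structure, and Lemma \ref{lem3} together with Theorem \ref{thm1} equip it with its canonical regular normal Cartan connection. Because $Q$ is homogeneous under $G$ acting by CR automorphisms, this canonical connection must coincide with the Maurer--Cartan form $\om_{MC}$ on $G\to G/P$, whose curvature vanishes identically; in particular here the Cartan bundle is $\G=G$ with the right $P$-action.

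First I would observe, using the final clause of Theorem \ref{thm1}, that every CR automorphism $\phi$ of $Q$ is a morphism of the underlying free CR distribution, hence lifts to an automorphism $\Phi\colon\G\to\G$ of the canonical normal Cartan connection, i.e.\ a $P$-equivariant diffeomorphism with $\Phi^*\om=\om$. Since $\G=G$ and $\om=\om_{MC}$, the problem reduces to identifying the automorphisms of the flat Cartan geometry $(G\to G/P,\om_{MC})$.

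Next I would invoke the standard rigidity of the homogeneous model (see \cite[Sections 1.5, 3.1]{CS09}): a diffeomorphism of $G$ preserving the left-invariant form $\om_{MC}$ is necessarily a left translation $L_g$, and every such $L_g$ is $P$-equivariant and descends to $G/P$. Thus the automorphism group of the flat geometry is the image of $G$ acting by left translations. The effectiveness arranged in Lemma \ref{lem1}, where the kernel $\Z_{2n+1}$ was quotiented out to pass from $\SU(n+1,n)$ to $\PSU(n+1,n)$, guarantees that distinct elements of $G$ induce distinct automorphisms, so this image is exactly $G=\PSU(n+1,n)$. Combined with the inclusion $G\subseteq\Aut Q$ already recorded after Lemma \ref{lem1}, this yields $\Aut Q=\PSU(n+1,n)$.

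The hard part will be the bookkeeping around the conjugate CR morphisms permitted by Theorem \ref{thm1}. A priori an automorphism respecting the distribution could act anti-linearly on $D$, corresponding to the choice $\tilde J=-J$ from Lemma \ref{lem2}; I would argue that such a map is an anti-holomorphic (conjugate) CR automorphism, and hence is not an element of $\Aut Q$ in the CR sense, so that the complex-linear automorphisms are exactly the left translations by $G$. Granting this, no computation beyond the effectiveness already established is required, and the explicit formulae for $\exp\g_1$, $\exp\g_2$ and $G_0$ displayed after Lemma \ref{lem1} exhibit generators of $G$ acting on the coordinate patch, confirming directly that every automorphism is of the stated form.
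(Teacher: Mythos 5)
Your proposal is correct and follows essentially the same route as the paper: the Maurer--Cartan form on $G=\PSU(n+1,n)$ is identified as the unique regular normal Cartan connection on $Q$, and the Liouville-type rigidity of the flat model (\cite[Section 1.5]{CS09}) forces every automorphism to be a left translation, with effectiveness from Lemma \ref{lem1} giving the exact group. Your extra bookkeeping on the conjugate (anti-linear) morphisms is a reasonable refinement that the paper leaves implicit.
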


\begin{proof}
By Corollary \ref{cor1} there is the free CR distribution structure on $Q$
and the Maurer-Cartan form on $G=\PSU(n+1,n)$
is clearly the normal regular Cartan connection
for the geometry in question. Thus, the uniqueness result and the Liouville
theorem on automorphisms of homogeneous spaces (cf. \cite[Section 1.5]{CS09})
show that all local automorphisms of this CR free distribution are
restrictions of left shifts by elements in $G=\PSU(n+1,n)$.
\end{proof}

\begin{cor}\label{cor3} 
The complex structure $J$ on
a free $CR$ distribution of dimension $n>2$ is always integrable.  
\end{cor}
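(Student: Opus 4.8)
The plan is to reduce integrability of $J$ to the vanishing of a single curvature component and then to read that vanishing off the homogeneity bookkeeping already carried out in Theorem \ref{thm1}. First I would observe that the totally real condition built into the definition of a free CR distribution is nothing but partial integrability: since $\mathcal L(JX,JY)=\mathcal L(X,Y)$, the Levi bracket is the imaginary part of a Hermitian form (see the proof of Lemma \ref{lem3}), so the $(TM/D)$-valued part of $[\Gamma(D^{1,0}),\Gamma(D^{1,0})]$ has no $(2,0)$-component. Consequently the only obstruction left to the involutivity $[\Gamma(D^{1,0}),\Gamma(D^{1,0})]\subseteq\Gamma(D^{1,0}\oplus(TM/D)^{\C})$ is the $D^{0,1}$-valued part of these brackets, and this is exactly the Nijenhuis tensor $N(X,Y)=[X,Y]-[JX,JY]+J([JX,Y]+[X,JY])$. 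Thus it suffices to prove that $N$ vanishes identically for $n>2$.

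Next I would match $N$ with a component of the harmonic curvature of the canonical regular normal Cartan connection supplied by Theorem \ref{thm1}. On the complexified distribution $N$ is by construction the tensor carrying two holomorphic vectors to an antiholomorphic one, together with its complex conjugate. This is precisely the pair of complex conjugate cohomology components in $H^2(\g_-,\g)$ isolated in the proof of Theorem \ref{thm1} and identified there, in analogy with the hypersurface CR case treated in \cite[Section 4.2]{CS09}, as the Nijenhuis tensor of the almost complex structure on $D$.

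The decisive point is the homogeneity of this pair. For $n=2$ it sits in homogeneity one and is a genuine invariant, but the Kostant computation underlying Theorem \ref{thm1} shows that for $n>2$ it is pushed into homogeneity zero. Since the Cartan connection produced by Theorem \ref{thm1} is regular, its curvature function carries no component of non-positive homogeneity; in particular the homogeneity-zero piece, and with it the Nijenhuis component, vanishes identically. Hence $N\equiv 0$ and $J$ is integrable, which is exactly the differential consequence of the vanishing of the curvature in homogeneity zero announced after the definition of free CR distributions. The only nontrivial ingredient is the representation-theoretic step of the previous paragraph, namely confirming via Kostant's algorithm that the Nijenhuis pair migrates from homogeneity one to homogeneity zero as $n$ grows past $2$; this is, however, already part of the proof of Theorem \ref{thm1}, which makes the present corollary essentially immediate.
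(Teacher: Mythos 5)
Your reduction of integrability to the vanishing of the Nijenhuis tensor $N$, and the identification of $N$ with the curvature component carrying two holomorphic arguments to an antiholomorphic value, are both fine and consistent with the paper. But the decisive step is wrong. A cochain in $\Lambda^2\g_{-1}^*\otimes\g_{-1}$ has homogeneity $(-1)-(-1)-(-1)=1$; this is fixed by the grading and does not depend on $n$, so the Nijenhuis component cannot ``migrate'' to homogeneity zero as $n$ grows past $2$. Indeed, if your mechanism were right it would apply verbatim for $n=2$ and regularity would kill $N$ there too, contradicting Theorem \ref{thm1}, which lists $N$ as a genuine additional invariant in that case. What the Kostant computation actually says is that for $n>2$ one of the two harmonic pairs sits in homogeneity zero, but that pair is then a \emph{different} module (cochains valued in $\g_{-2}$, i.e.\ a deformation of the Levi bracket, excluded by regularity), not the Nijenhuis module.

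The paper's actual argument is the complementary one: the lowest nonvanishing homogeneity of the curvature of a regular normal Cartan connection is harmonic, so the entire homogeneity-one torsion coincides with its harmonic part; for $n>2$ the harmonic cohomology in homogeneity one consists of the single module $\Hom(\g_{-1}\otimes\g_{-2},\g_{-2})$ and contains no component of type $\Lambda^2\g_{-1}^*\otimes\g_{-1}$. Hence the Nijenhuis part of the homogeneity-one torsion, failing to be harmonic, must vanish. To repair your proof, replace the ``pushed into homogeneity zero'' claim by this argument: keep the homogeneity of $N$ at one, and instead invoke the fact that the homogeneity-one curvature is exhausted by the harmonic components, among which the Nijenhuis type does not appear when $n>2$.
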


\begin{proof}
The general theory of parabolic geometries ensures that the homogeneity $1$
component of the torsion consists just of the harmonic components. 
Since the component of the torsion corresponding to the Nijenhuis tensor of
$J$ is not listed among the harmonic ones, and its homogeneity is one, this
part of torsion has to vanish.
\end{proof}

\section{The fundamental invariant of free CR distributions}

In order to exploit the existence of the unique normal Cartan connection
$\om$ for free CR distributions deduced in Theorem \ref{thm1}, 
we do not need to
compute the whole canonical frame in the classical way.
We shall rather make only the first step in the standard 
prolongation procedure, i.e.~we compute
the necessary objects of homogeneity one. This will provide us with the
fundamental invariants, as well as the splitting of the $P$ modules into 
the $G_0$-modules if their length of grading is at most two. In particular,
this provides the distinguished complementary subspaces 
to the CR distribution on the tangent
space. The latter data will be also 
sufficient for the Fefferman construction later on.

\subsection{Structure equations for free CR distributions}
For the sake of
simplicity, we shall restrict ourselves to the case of integrable complex
structure on $D$, which is always satisfied for embedded free CR
distributions and does not represent any restriction for ranks $n>2$.

In the Cartan-Tanaka procedure, all information related to homogeneity $1$
objects is obtained
after the first prolongation step (the bottom-to-top approach). The construction
in \cite[Section 3.1]{CS09} provides the entire Cartan connection and
the complete information on the structure of the curvature, without the
explicit prolongation. We shall combine
these two approaches by using the detailed knowledge on the curvature during
the explicit computation. In the sequel, we shall always
ignore the lowest dimensional case with $n=1$ since the curvature structure
is different and this case of real hypersurfaces in $\Bbb C^2$
is well known.

Let $D$ be a non-degenerate rank $2n$ vector distribution 
on a manifold $M$ of dimension
$2n+n^2$, $n>1$. We assume that a complex structure $J$ on $D$ is given such
that the Levi form $\mathcal L:\Lambda^2 D\to TM/D$
defined by the Lie bracket of vector fields is totally real. As usual, we
shall work in the complexification $TM\otimes \Bbb C$. The bundle $D\otimes \Bbb C$ splits into the holomorphic and
anti-holomorphic eigen-subbundles $D^{1,0}$ and $D^{0,1}$ with respect to the complex
structure $J$. Typical
sections of $D^{0,1}$ are written as $\xi+iJ\xi$ for $\xi\in D$.
The complex bilinear extension of $\mathcal L$ evaluates on such sections as
$$
\mathcal L(\xi+iJ\xi, \zeta+iJ\zeta) = (\mathcal L(\xi,\zeta) - \mathcal L(J\xi,
J\zeta)) + i(\mathcal L(J\xi,\zeta) + \mathcal L(\xi,J\zeta))
$$
and so the assumption that $\mathcal L$ is totally real is equivalent to the
vanishing of this expression. This is, in fact, equivalent to the fact that the (complexified)
Lie bracket of two vector fields in $D^{0,1}$ lies in $D\otimes \Bbb C$ and
similarly for $D^{1,0}$.

Now, let us fix a basis $X_1,\dots, X_n$ of $D^{1,0}$, and the complex
conjugate basis
$X_{\bar 1},\dots,X_{\bar n}$ of $D^{0,1}$. The non-degeneracy condition implies that the $n^2$
vector fields $X_{i\bar j} = - [X_i,X_{\bar j}]$, $\quad 1\le i,j \le n$,
complete them to a basis of the complexified
tangent space $TM\otimes \Bbb C$ at all points where the fields $X_i$,
$X_{\bar i}$ are defined. We shall consider a choice corresponding to
a complex basis $\xi_1,\dots,\xi_n$ of $D$ as above and the two obvious
complex conjugate bases of $D^{0,1}$ and $D^{1,0}$.
Moreover, since the almost complex structure on $D$ is integrable,
the distributions
$D^{0,1}$ and $D^{1,0}$ are involutive and we may assume that their generators
commute.

With these data at hand, we may start the computations of the fundamental
torsion invariant from Theorem \ref{thm1}.

Let $\{X_i,X_{\bar i},X_{i\bar j} \}$ be any local frame of $TM\otimes \Bbb
C$ as above.  To indicate the Hermitian skew symmetry
$X_{j\bar{i}}=-\overline{X_{i\bar{j}}}$ we write $X_{[i\bar{j}]}$.  Denote
by $\{ \th^i, \th^{\bar i}, \th^{[j\bar k]} \}$ the dual coframe of
$T^*M\otimes \Bbb C$. In fact, we shall be working on the real tangent space
$TM$ and so we shall need the coframe $\{ \th^i, \th^{[j\bar k]} \}$ where
the forms $\th^i$ represent a real form valued in $\Bbb C^n$, 
$\th^{\bar i} = \overline{\th^i}$, 
and the forms $\th^{j\bar k}$ represent a real form valued
in skew Hermitian matrices.

Let us write $D^{\perp}$ for the set of all 1-forms on $M$ annihilating
$D$.  It is clear that $D^{\perp}$ is generated by $\th^{[j\bar k]}$.

Note that $d\th^{[j\bar k]}(X_r, X_{\bar s}) = - \th^{[j\bar k]}([X_r,X_{\bar s}]) = \delta^j_r\delta^{\bar k}_{\bar s}$,
while $d\th^{[j\bar k]}(X_r, X_s) = 0$ because of the integrability. 
This implies that
$$
d\th^{[j\bar k]} = \th^j\wedge\th^{\bar k} \mod \langle \th^{[r\bar s]}\rangle.
$$
So, the structure equations of the coframe $\{\th^i,\th^{[j\bar k]}\}$ have
the form (here and below we use the Einstein summation convention):
\begin{equation}\label{streq}
\begin{aligned}
d\th^r =& f^r_{ij\bar k}\th^i\wedge\th^{[j\bar k]}+ f^r_{\bar{i}j \bar k}\th^{\bar i}\wedge\th^{[j\bar k]} + f^r_{i\bar{j}k\bar{l}}\th^{[i\bar{j}]}\wedge \th^{[k\bar{l}]}, \\
d\th^{[r\bar{s}]} =& \th^r\wedge \th^{\bar{s}} + f^{r\bar{s}}_{ij\bar{k}}\th^i\wedge\th^{[j\bar{k}]} + f^{r\bar{s}}_{\bar{i}j\bar{k}}\th^{\bar{i}}\wedge\th^{[j\bar{k}]} +f^{r\bar{s}}_{i\bar{j}k\bar{l}}\th^{[i\bar{j}]}\wedge \th^{[k\bar{l}]},
\end{aligned}
\end{equation}
where $f^r_{ij\bar{k}}$, $f^r_{\bar{i}j\bar{k}}$, $ f^r_{i\bar{j}k\bar{l}}$,
$f^{r\bar{s}}_{ij\bar{k}}$,  $f^{r\bar{s}}_{\bar{i} j\bar{k}}=\overline{f^{s\bar{r}}_{ik\bar{j}}} $,  $f^{r\bar{s}}_{i\bar{j}k\bar{l}}$ are the structure functions of the coframe $\{\th^i, \th^{\bar{i}}, \th^{[j\bar{k}]}\}$ on $M$, which are uniquely determined by the choice of the frames $X_i, X_{\bar i}$.
The absence of terms of the form $\th^i\wedge\th^j$, $\th^i\wedge\th^{\bar{j}}$ and $\th^{\bar{i}}\wedge\th^{\bar{j}}$ in the first formula follows from the integrability and a suitable choice of the basis.

The integrability also implies 
that the $f^{r\bar{s}}_{ij\bar{k}}$ are symmetric in $i,j$. Indeed, we have
\begin{align*}
 f^{r\bar{s}}_{ij\bar{k}}=&d\theta^{r\bar{s}}(X_i,[X_j,X_{\bar{k}}])\\
 =&-\th^{r\bar{s}}([X_i,[X_j,X_{\bar{k}}]])\\
=& -\th^{r\bar{s}}([[X_i,X_j],X_{\bar{k}}]]) -\th^{r\bar{s}}([X_j,[X_i,X_{\bar{k}}]])\\
=& -\th^{r\bar{s}}([X_j,[X_i,X_{\bar{k}}]])= f^{r\bar{s}}_{ji\bar{k}}.
 \end{align*}
 
Note that these coefficients do not form any
tensor, since their transformation rule under the change of the frame involves
derivatives. 

\subsection{The prolongation in homogeneity one}
The natural Cartan connection associated with the distribution $D$
will allow us to construct the coframes behaving much nicer and we shall
obtain the components of the curvature tensor at the same time.

Let $\pi\colon \G\to M$ be any principal $P$-bundle on $M$ and
$\om \colon T\G\otimes \Bbb C\to \g\otimes \Bbb C$
be a regular and normal Cartan connection of type $G/P$ (which always exists
by Theorem 1). For any
section $s\colon M\to\G$ we can write explicitly: 
\[
s^*\om = \begin{pmatrix} \om^i_j & \om_i & \om_{[i\bar{j}]} \\
 \om^j & -2i \Im \tr \omega^i_j &  -\om_{\bar{i}} \\
   \om^{[i\bar{j}]} &- \om^{\bar{j}} & -\om^{\bar j}_{\bar{i}}\end{pmatrix}
\]

where $\om^{[i\bar{j}]}$, $\om_{[i\bar{j}]}$, $\om^i_j$, $\om^i$, $\om^{\bar j}_{\bar{i}}=\overline{\om^i_j} $, $\om_j$ are 1-forms on
$M$.

We say that the Cartan connection $(\G,\om)$ is \emph{adapted to the
distribution $D$}, if $D=\langle \om^{[i\bar{j}]}\rangle^\perp$. It is easy
  to see that this definition does not depend on the choice of the
  section $s$.

Let $(G,\om)$ be an adapted Cartan connection. Then we have
\[
 D^\perp = \langle \om^{[i\bar{j}]} \rangle = \langle \th^{[i\bar{j}]} \rangle,
\]
where, as above, the forms $\th^{[i\bar{j}]}$ are defined by fixing a frame
$\{X_1,\dots,X_n\}$ on $D^{1,0}$.

We can always choose such section $s\colon M \to \G$ that
\begin{equation}\label{norm0}
\om^i = \th^i \mod D^{\perp}.
\end{equation}
This condition defines $s$ uniquely up to the transformations $s\to sg$, where
$g\colon M\to P_{+}$ is an arbitrary $P_{+}$-valued gauge transformation.

Consider the component $\Om^{[i\bar{j}]}$ of the curvature tensor:
\[
\Om^{[i\bar{j}]}=d\om^{[i\bar{j}]} - \om^i\wedge\om^{\bar{j}}+\om^i_k\wedge
\om^{[k\bar{j}]}-\om^{[i\bar{k}]}\wedge\om^{\bar j}_{\bar k}.
\]

Let us remind that $\om$ is regular and so only positive homogeneities may
appear in the curvature. The vanishing of homogeneity zero component 
immediately implies that
\[
d\om^{[i\bar{j}]} = \om^i\wedge\om^{\bar{j}} = \th^i\wedge\th^{\bar{j}} \mod D^{\perp},
\]
and, hence
\begin{equation}\label{norm01}
\om^{[i\bar{j}]}=\th^{[i\bar{j}]}\quad \mbox{for all $1\le i<j\le n$.}
\end{equation}

Next, let us compute the curvature coefficients of degree $1$ together with the
section normalizations of the form $s\mapsto \tilde s = sg$, where $g$ takes
values in $\exp{\g_1}$. Any such transformation leads to the
following transformation of the pull-back forms $\tilde s^*\om$:
\begin{align*}
\tom^{[i\bar{j}]} & = \om^{[i\bar{j}]},\\
\tom^{i} & = \om^i + p_{\bar{j}}\om^{[i\bar{j}]},\\
\tom^i_j &= \om^i_j- p_j \om^i - \frac{1}{2} p_j p_{\bar{k}} \om ^{[i\bar{k}]},
\end{align*} 
where the functions $p_{\bar j}$ define the mapping $dg\colon M\to \g_1$,
$p_{\bar j}= \overline{p_j}$.
Assume that 
\begin{align*}
\om^i &= \th^i + C^i_{j\bar{k}}\om^{[j\bar{k}]},\\
\om^i_j &= A^i_{kj}\om^k  + B^i_{\bar{k}j} \om^{\bar{k}} \mod D^{\perp}
\end{align*} 
for some functions $A^i_{kj}$, $B^i_{\bar{k}j}$, $C^i_{j\bar{k}}$. They are transformed by the gauge
transformation $g$ according to the following formula:
\begin{align*}
\tilde A^i_{kj} & = A^i_{kj} - \delta^i_kp_j;\\
\tilde B^i_{kj} & = B^i_{kj}\\
\tilde C^i_{j\bar{k}} & = C^i_{j\bar{k}} +\delta^i_{j} p_{\bar{k}}.
\end{align*}
We can use the functions $p_j$ to control the trace $A^i_{ij}$. 
We compute modulo $\Lambda^1(M)\wedge D^{\perp}$

\begin{align*}
\Omega^{i\bar{j}}\equiv & \th^i\wedge \th^{\bar{j}} + f^{i\bar{j}}_{rs\bar{k}} \th^r\wedge \th^{[s\bar{k}]} + 
\overline{f^{j\bar{i}}_{rk\bar{s}}} \th^{\bar{r}}\wedge \th^{[s\bar{k}]}- \om^i\wedge\om^{\bar{j}}+\om^i_k\wedge
\om^{[k\bar{j}]}-\om^{[i\bar{k}]}\wedge\om^{\bar j}_{\bar k}\\
\equiv& (\om^i - C^i_{r\bar{k}}\om^{[r\bar{k}]})\wedge (\om^{\bar j} - \overline{C^j_{k\bar{s}}} \overline{\om^{[k\bar{s}]}})
+ f^{i\bar{j}}_{rs\bar{k}} \om^r\wedge \om^{[s\bar{k}]} + 
\overline{f^{j\bar{i}}_{rk\bar{s}}} \om^{\bar{r}}\wedge \om^{[s\bar{k}]}\\& - \om^i\wedge\om^{\bar{j}}+A^i_{rk}\om^r\wedge
\om^{[k\bar{j}]}+B^i_{\bar{r}k}\om^{\bar r}\wedge
\om^{[k\bar{j}]}+ \overline{A^j_{rk}} \om^{\bar r} \wedge \om^{[i\bar{k}]}+ \overline{B^j_{\bar{r}k}} \om^{r} \wedge \om^{[i\bar{k}]}.\end{align*}
It follows
\[ \Om^{[i\bar{j}]} \equiv P^{i\bar{j}}_{rs\bar{t}}\om^r\wedge\om^{[s\bar{t}]}  +\overline{ P^{j\bar{i}}_{rt\bar{s}}}\om^{\bar r}\wedge\om^{[s\bar{t}]} \mod \wedge^2D^\perp,\]
where
\[P^{i\bar{j}}_{rs\bar{t}}= f^{i\bar{j}}_{rs\bar{t}} + A^i_{rs} \delta^{\bar{j}}_{\bar{t}}+ \overline{B^j_{\bar{r}t}} \delta^{i}_{s} +\overline{C^j_{t\bar{s}}}\delta^i_r. \]

Next we compute the torsion part of the curvature
\[ \Om^i = d \om^i - \om^j\wedge \om^i_j + (\om^j_j - \overline{\om^j_j})\wedge\om^i + \om_{\bar{j}}\wedge\om^{[i\bar{j}]}\]
modulo $D^\perp$. We have
\[ d\om^i\equiv d\th^i + C^i_{j\bar{k}} \th^j\wedge \th^{\bar{k}} \equiv C^i_{j\bar{k}} \om^j\wedge \om^{\bar{k}} \]
\begin{align*}
 \Om^i &= d \om^i - \om^j\wedge \om^i_j + (\om^j_j - \overline{\om^j_j})\wedge\om^i + \om_{\bar{j}}\wedge\om^{[i\bar{j}]}\\
 &\equiv   C^i_{j\bar{k}} \om^j\wedge \om^{\bar{k}} +   A^i_{kj} \om^k \wedge \om^j +   B^i_{\bar{k}j} \om^{\bar{k}} \wedge \om^j \\& - (A^j_{kj}\om^k +B^j_{\bar{k}j}\om^{\bar{k}}  - \overline{A^j_{kj}}\om^{\bar{k}}-\overline{B^j_{\bar{k}j}}\om^{k})\wedge \om^i.
\end{align*} 
Hence
\[ \Om^i= Q^i_{rs}\om^r\wedge\om^s  + Q^i_{r\bar{s}} \om^r\wedge\om^{\bar s}  \mod  D^{\perp} , \]
where
\begin{align*}
 Q^i_{[rs]}&=  A^i_{[rs]} - \frac12(A^j_{rj} \delta^i_{s}- A^j_{sj} \delta^i_{r}) + \frac12 (\overline{B^j_{\bar{r}j}} \delta^i_{s}- \overline{B^j_{\bar{s}j}} \delta^i_{r}) \\
 Q^i_{r\bar{s}}&= C^i_{r\bar{s}} -B^i_{\bar{s}r}- \overline{A^j_{sj}}\delta^i_r+ B^j_{\bar{s}j}\delta^i_r.
\end{align*}

Assuming that the connection does not have the torsion in the term
$\Hom(\wedge^2\g_{-1},\g_{-1}$) (as it is implied from the normality assumption
via Kostant's theorem in dimensions $n>2$ and by our integrability
assumption for $n=2$), we get $Q^i_{[rs]}=0$, $Q^i_{r\bar{s}}=0$, that is
\begin{align*}
 A^i_{[rs]} &=\frac12 (A^j_{rj} \delta^i_s - A^j_{sj} \delta^i_r)- \frac12 (\overline{B^j_{\bar{r}j}} \delta^i_{s}- \overline{B^j_{\bar{s}j}} \delta^i_{r}) ,\\
 C^i_{r\bar{s}} &= B^i_{\bar{s}r}+ \overline{A^j_{sj}} \delta^i_r - B^j_{\bar{s}j}\delta^i_r.
\end{align*}

It follows from the first equality that
\[ A^i_{is}= (2-n) A^j_{sj} + (n-1) \overline{B^j_{\bar{s}j}}. \]

We consider the cases $n>2$ and $n=2$ now separately. Thus let
us assume $n> 2$. 

If we make $A^i_{is}=0$, using the freedom in $\mathfrak g_1$, we have
\[A^j_{sj}= \frac{n-1}{n-2} \overline{B^j_{\bar{s}j}}\]
and
\[ C^i_{r\bar{s}} = B^i_{\bar{s}r}+ \frac{1}{n-2} B^j_{sj} \delta^i_r. \]

Substituting the second equality into the expression of $P^{i\bar{j}}_{rs\bar{t}}$ we get:
\begin{equation}\label{tensorP}
P^{i\bar{j}}_{rs\bar{t}} = f^{i\bar{j}}_{rs\bar{t}}
+ A^i_{rs} \delta^{\bar{j}}_{\bar{t}}+\overline{B^j_{\bar{r}t}} \delta^{i}_{s}+ \overline{B^j_{\bar{s}t}}\delta^i_r +\frac{1}{n-2}\overline{B^l_{\bar{s}l}}\delta^{\bar{j}}_{\bar{t}}\delta^i_r.
\end{equation}
Notice that
\[P^{i\bar{j}}_{[rs]\bar{t}} = f^{i\bar{j}}_{[rs]\bar{t}}=0\]
automatically.
Now,
\begin{equation}\label{tensorPsym}
P^{i\bar{j}}_{(rs)\bar{t}} = f^{i\bar{j}}_{rs\bar{t}}
+ A^i_{(rs)} \delta^{\bar{j}}_{\bar{t}}+2\overline{B^j_{(\bar{r}|t|}} \delta^{i}_{s)} +\frac{1}{n-2}\overline{B^l_{(\bar{s}|l}}\delta^{\bar{j}}_{\bar{t}|}\delta^i_{r)}.
\end{equation}
The traces of $P^{i\bar{j}}_{(rs)\bar{t}}$ are
\begin{align*}
 P^{i\bar{j}}_{(rs)\bar{j}} &= f^{i\bar{j}}_{rs\bar{j}}
+ n A^i_{(rs)} +2\overline{B^j_{(\bar{r}|j|}} \delta^{i}_{s)} +\frac{n}{n-2}\overline{B^l_{(\bar{s}|l|}}\delta^i_{r)} \\
&=f^{i\bar{j}}_{rs\bar{j}}
+ n A^i_{(rs)} +\frac{3n-4}{n-2}\overline{B^l_{(\bar{s}|l|}}\delta^i_{r)}\\
 P^{i\bar{j}}_{(is)\bar{j}} &= f^{i\bar{j}}_{is\bar{j}}
+ \frac{n}{2} A^i_{si} +\frac{(3n-4)(n+1)}{2(n-2)}\overline{B^l_{\bar{s}l}}=  f^{i\bar{j}}_{is\bar{j}}
 +\frac{(2n^2-n-2)}{n-2}\overline{B^l_{\bar{s}l}}\\
 P^{i\bar{j}}_{(is)\bar{t}} &= f^{i\bar{j}}_{is\bar{t}}
+\frac12 A^i_{si} \delta^{\bar{j}}_{\bar{t}}+(n+1)\overline{B^j_{\bar{s}t}}  +\frac{n+1}{n-2}\overline{B^l_{\bar{s}l}}\delta^{\bar{j}}_{\bar{t}}\\& = f^{i\bar{j}}_{is\bar{t}}
+(n+1)\overline{B^j_{\bar{s}t}}  +\frac{3n+1}{2(n-2)}\overline{B^l_{\bar{s}l}}\delta^{\bar{j}}_{\bar{t}}.
 \end{align*}
Since we have assumed that $n\ge 3$, we can uniquely determine coefficients
$A^i_{kj}$ and $B^i_{\bar{k}j}$ so that the tensor
$P^{i\bar{j}}_{rs\bar{t}}$ is totally trace-free. This in turn defines the
coefficients $C^i_{r\bar s}$ as well and we are done.

In the case $n=2$ the computations are a bit simpler. 
Going back to our general equalities and normalizing the section so that the
trace $A^i_{is}$ vanishes, we immediately conclude that the trace
$B^j_{\bar sj}$ vanishes too. Consequently,
\begin{align*}
 A^i_{[rs]} &=\frac12 (A^j_{rj} \delta^i_s - A^j_{sj} \delta^i_r),\\
 C^i_{r\bar{s}} &= B^i_{\bar{s}r}+ \overline{A^j_{sj}} \delta^i_r.
\end{align*}
Thus we can again substitute into the expression for $P^{i\bar j}_{rs\bar t}$
to obtain
$$
P^{i\bar{j}}_{rs\bar{t}} = f^{i\bar{j}}_{rs\bar{t}}
+ A^i_{rs} \delta^{\bar{j}}_{\bar{t}}+\overline{B^j_{\bar{r}t}} \delta^{i}_{s}
+ \overline{B^j_{\bar{s}t}}\delta^i_r + A^k_{sk}\delta^{\bar j}_{\bar
t}\delta^{i}_r.
$$
A direct check and computation of the traces again reveal that the
expression is automatically symmetric in the indices 
$r$ and $s$ and the $A$'s and $B$'s (and thus also $C$'s) are uniquely
determined by requiring the traces to vanish.

Summarizing, we have verified the following:
\begin{thm}\label{thm2}
The trace-free tensor $P^{i\bar j}_{rs\bar t}$ computed above
is the only fundamental invariant of the non-degenerate free CR
distribution $D$ of rank $n\ge 3$
on a manifold of dimension $2n+n^2$. Hence, the Cartan
connection associated with $D$ is locally 
flat if and only if this tensor vanishes identically.

In the case of non-degenerate real $4$-dimensional free CR-distributions
$D$ on $8$-dimensional manifolds, the Nijenhuis tensor of the complex
structure on $D$ together with the trace-free tensor 
$P^{i\bar j}_{rs\bar t}$ computed above form the complete system of
invariants. Hence, the Cartan
connection associated with $D$ is locally 
flat if and only if these tensors vanish identically.
\end{thm}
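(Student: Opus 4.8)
The plan is to read the theorem off the explicit normalization carried out above, using Theorem \ref{thm1} to supply the abstract classification of fundamental invariants against which the hands-on computation is matched. First I would argue that $P^{i\bar j}_{rs\bar t}$ is a genuine tensorial invariant and not merely a frame-dependent array of structure functions. The gauge freedom inside $\exp\g_1$ was spent exactly to enforce the vanishing of the traces; the displayed trace formulas show that for $n\ge 3$ this determines the coefficients $A^i_{kj}$ and $B^i_{\bar k j}$ (and hence $C^i_{r\bar s}$) uniquely, so that, at the homogeneity-one level, the admissible section $s$ is fixed up to the residual action of the reductive part $G_0$. Under $G_0$ the array $P^{i\bar j}_{rs\bar t}$ transforms according to its index pattern, hence descends to a well-defined section of the bundle associated with the $\sll(n,\C)$-module $\Hom(\g_{-1}\otimes\g_{-2},\g_{-2})$.

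Next I would identify this explicit object with the harmonic curvature predicted by Kostant's theorem in Theorem \ref{thm1}. The upper index $[i\bar j]$ records the $\g_{-2}$ target, the index $r$ the $\g_{-1}$ slot, and $[s\bar t]$ the $\g_{-2}$ slot, so $P$ realises precisely the homogeneity-one torsion in $\Hom(\g_{-1}\otimes\g_{-2},\g_{-2})$. The computation already yields $P^{i\bar j}_{[rs]\bar t}=0$ automatically, and the remaining trace-carrying pieces were annihilated by the normalization, so $P$ lands in the totally trace-free submodule singled out in Theorem \ref{thm1}. Since for $n\ge 3$ that submodule is the sole harmonic component, the explicit $P$ is the entire fundamental invariant, which proves the first assertion.

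The flatness statements then follow from the general theory recalled above: a regular normal parabolic geometry is locally isomorphic to its flat model if and only if its harmonic curvature vanishes, the full curvature being recovered from the harmonic part by a natural differential operator. For $n\ge 3$ the harmonic curvature is $P$ alone, so local flatness is equivalent to $P\equiv 0$. For $n=2$ the computation above ran under the integrability hypothesis, which is the same as setting the Nijenhuis component of the curvature to zero; by Theorem \ref{thm1} the homogeneity-one harmonic curvature comprises in this dimension two complex-conjugate pairs, one assembling into $P$ and the other into the Nijenhuis tensor $N$ of the complex structure on $D$. Releasing the integrability assumption, the complete harmonic curvature is the pair $(N,P)$, and local flatness is equivalent to the simultaneous vanishing of both.

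The hard part is the bookkeeping in the middle step: one must check that the successive trace operations on $P^{i\bar j}_{(rs)\bar t}$ strip off exactly the reducible part of $\Hom(\g_{-1}\otimes\g_{-2},\g_{-2})$, leaving an irreducible trace-free remainder that coincides with the harmonic module of Theorem \ref{thm1}. Concretely this is the invertibility, for $n\ge 3$, of the linear system in $A^i_{kj}$ and $B^i_{\bar k j}$ imposed by the trace equations; the coefficients $\tfrac{1}{n-2}$ and $\tfrac{2n^2-n-2}{n-2}$ appearing above are precisely the witnesses of this non-degeneracy, and their failure at $n=2$ is exactly what is compensated by the additional Nijenhuis degree of freedom in that dimension.
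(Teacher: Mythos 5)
Your proposal follows the paper's own route: Theorem \ref{thm2} is stated there as a summary of the preceding explicit normalization, with $P^{i\bar j}_{rs\bar t}$ identified as the homogeneity-one harmonic torsion in $\Hom(\g_{-1}\otimes\g_{-2},\g_{-2})$ singled out by Theorem \ref{thm1} and the flatness criterion supplied by the general theory of regular normal parabolic geometries, exactly as you argue. One small correction to your closing remark: in the paper's separate $n=2$ computation the trace normalization still determines the $A$'s, $B$'s and hence the $C$'s uniquely (one finds $A^i_{is}=0$ forces $B^j_{\bar s j}=0$, avoiding the $1/(n-2)$ factors altogether), so the Nijenhuis tensor enters as an independent harmonic component in homogeneity one rather than as a compensation for any degeneracy of the trace system.
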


In particular, vanishing of 
the tensor $P^{i\bar j}_{rs\bar t}$ gives us the explicit condition when an
arbitrary non-degenerate free CR-distribution $D$ of rank $2n\ge4$ 
is locally equivalent
to the left-invariant distribution on the nilpotent Lie group corresponding
to the algebra~$\g_{-}$ (in the case of rank 4 we have to add the
integrability of the complex structure $J$ on $D$). Moreover, let us notice
that we obtain these complete data in the first prolongation step already.

\begin{rem}
The conclusion of Theorem 2 that the complex structures are always
integrable, except the lowest dimension of interest, is in fact 
not surprising. Just notice that the cohomology $H^2(\g_-,\g)$ lives in
homogeneities zero and one and we have excluded the zero homogeneity by the
regularity assumption on the infinitesimal flag structures. 
A generic normal Cartan connection of our type 
will have its Nijenhuis tensor of the
complex structure on $D$ as a differential consequence of the homogeneity
zero harmonic component (via the Bianchi identity and the BGG machinery).
Thus on embedded manifolds in $\Bbb C^{n+n^2}$, $n\ge2$, the
Nijenhuis tensor vanishes for common reasons. 
\end{rem}

\begin{rem}
Let us also comment on the geometric meaning of the
coefficients $A$, $B$ and $C$ which we computed during our prolongation step.

In general terms, these objects correspond to the choice of partial
Weyl connections (the coefficients $A^i_{jk}$ and $B^i_{\bar{k}j}$ define
the appropriate parts of the principal connection form, thus being the
usual Christoffel symbols of the corresponding affine connection) 
and the splittings of the
filtration (the improvement of the coframe by deforming $\th^i$ with the
help of the coefficients $C^i_{kl}$), cf.
\cite[Chapter 5]{CS09}. Both of
these objects have to be fixed together because they influence the same
curvature components in homogeneity one. This has been reflected by the
explicit link between $A$'s, $B$'s and $C$'s above. Of course, 
these partial Weyl connections
are analogs of the Webster-Tanaka connections for
hypersurface type CR geometries, cf. \cite[Section 5.2.12]{CS09}. The next
step of the prolongation procedure along the similar lines would complete
the connections to include also differentiation in the directions
complementary to $D$ and would compute the homogeneity two component of the
so called Rho tensor.
\end{rem}

\section{The Fefferman construction}

\subsection{The abstract functorial setting}\label{sec:fefferman}
The original Fefferman's construction of the circle bundle over a
hypersurface type CR manifold, equipped with a conformal structure, can be
presented in an abstract algebraic way, see \cite[Section 4.5]{CS09}.

Let $G/P$ and $\tG/\tP$ be two (real or complex) parabolic
homogeneous spaces and let us assume
\begin{itemize}
\item an infinitesimally injective homomorphism $i\colon G\to \tG$ is given,
\item the $G$-orbit of $o=e\tilde P\in\tilde G/\tilde
P$ is open (thus, $\frak g\to \tilde{\frak g}/\tilde{\frak
p}$ induced by $i':\frak
g\to\tilde{\frak g}$ is surjective),
\item $P\subset G$ contains $Q:=i^{-1}(\tilde P)$.
\end{itemize}

Consequently, there is the natural projection $\pi:G/Q\to G/P$,
$Q$ is a closed subgroup of $G$ (which is usually not parabolic). Moreover,
the homomorphism $i:G\to\tilde G$ induces the smooth map $G/Q\to \tilde
G/\tilde P$ which is a covering of the $G$-orbit of
$o$, and the latter open subset in $\tilde G/\tilde P$ carries
a canonical geometry of type $(\tilde G,\tilde P)$. This can be
pulled back to obtain such a geometry on $G/Q$.

In particular, it may happen that
$$
i(G)\tP=\tG\ \mbox{and}\
i(P)=i(G)\cap \tP
$$
i.e. $Q=P$ is the parabolic subgroup.
Then both parabolic geometries turn out to live over the same base manifold
$G/P = \tilde G/\tilde P$.
We say that $i$ is an
{\em inclusion of parabolic homogeneous spaces}.
This was the case of the free rank $\ell$ distributions with the spinorial
geometry on the Fefferman space, see \cite{DS09}.

The two steps discussed above are instances of two general 
functorial constructions on Cartan geometries $(\mathcal
G, \om)$
\begin{itemize}
\item the correspondence spaces
\item the structure group extensions.
\end{itemize}

The first one is given by a choice of subgroups $Q\subset P\subset G$
and it increases the underlying manifold $M=\mathcal G/P$
into a fibre bundle $\tilde M = \mathcal G/Q\to M$ with fiber $Q/P$.

The other one is based on embeddings of the structure group $G\to \tilde G$
and reasonable choices of subgroups $P\subset G$, $\tilde P\subset \tilde
G$,
and it leads to Cartan geometries on the same manifolds $M$, but with bigger
structure groups.

Combination of these two steps yields the Fefferman-like constructions as
shown above.

\subsection{$|1|$-graded parabolic geometry modelled on $\frak s\frak
u(n,n)$}
From the algebraic point of view, we have seen that the passage from free
distributions to free CR distributions consisted in replacing smartly
skew-symmetric matrices with the skew-Hermitian ones. 
Let us try to find the right way for generalizing the Fefferman construction
by inspecting carefully our lowest dimensional example with $n=1$, i.e. the
hypersurface type CR-structure embedded in $\C^2$. 

There, the original Fefferman construction provides a circle bundle $\tilde
M$ over $M$, equipped with a four-dimensional conformal structure in
Lorentzian signature. But the $4$-dimensional Lorentzian conformal geometry, in fact, can be equally well modelled as a $\mathfrak {su}(2,2)$ Cartan geometry.

Indeed, $\mathfrak{su}(2,2)$ can be represented by block matrices $\begin{pmatrix}A&Z\\X&-A^* \end{pmatrix}$ with $A\in \mathfrak{gl}(2,\mathbb C)$ such that $\operatorname{tr} A$ is real and $X,Z\in \mathfrak{u}(2)$. Then the components of grade $-1,0,1$ correspond to the lower-left, the diagonal and upper right blocks. The induced geometric structure is a choice of $\mathfrak{u}(2)$-frames on the 4-dimensional tangent spaces with structure group consisting of all regular complex $2\times 2$ matrices $A$ with real determinant acting on the skew-Hermitian frames $X$ by $(A^*)^{-1}X A^{-1}$. This action clearly preserves the conformal class of the real inner product induced on the tangent spaces by the determinant $\det X$.  (For details see \cite[4.1.10]{CS09}.)   

For a higher dimensional analogue of this geometry consider $\mathfrak{su}(n,n)$ with $|1|$-grading 
of $\mathfrak {su}(n,n)=\g_{-1}\oplus\g_{0}\oplus
\g_1$ as follows
\begin{align*}
\begin{matrix}\hspace{7mm} n \hspace{11mm}&  n \hspace{9mm}
\end{matrix}&\\
\begin{pmatrix}  \fbox{\rule[-3mm]{0cm}{10mm} \hspace{3mm} 0
\hspace{3mm}} &
\fbox{\rule[-3mm]{0cm}{10mm} \hspace{3mm} 1 \hspace{3mm}}\\
\rule{0cm}{.1mm}& \rule{0cm}{.1mm}\\
\fbox{\rule[-3mm]{0cm}{10mm} \hspace{2.5mm} -1 \hspace{2.5mm}} 
& \fbox{\rule[-3mm]{0cm}{10mm}
\hspace{3mm} 0 \hspace{3mm}}
\end{pmatrix}& \quad
\begin{matrix}\rule[-8mm]{0cm}{15mm}  n\\
\rule[-3mm]{0cm}{10mm}  n
\end{matrix}
\end{align*}

For a suitable choice of the pseudo-Hermitian form in the split signature $\frak g_{\pm1}$ are the spaces
of skew-Hermitian matrices with respect to the anti-diagonal, 
while $\frak g_0 = \frak s\frak l(n,\Bbb C)\oplus \Bbb R$ with 
pairs of matrices $(A, -\bar A^T)$ appearing always in the block diagonal, and
$\operatorname{tr}A\in\Bbb R$. Notice, $\g_{\pm1}$ are dual to each
other with  respect to the Killing form.

Now, as for any $|1|$-graded parabolic geometry, the geometric structure is given by
the appropriate reduction of the frame bundle to the structure group $G_0$:

\begin{df}
The $|1|$-graded geometry modelled on $\mathfrak{su}(n,n)$ 
is given by a smooth identification of the tangent spaces
$T_pM$ to a real $n^2$-dimensional manifold $M$ with the space of
skew-Hermitian $2$-forms $\Lambda^2_{\operatorname{skew-H}}(\mathcal S)$
on an auxiliary complex $n$-dimensional bundle $\mathcal S$ over $M$. Or, alternatively, it can be given by a smooth identification of the tangent spaces with $n\times n$ skew-Hermitian matrices $\psi_p\colon T_pM\to \mathfrak u(n)$, where two such identifications $\psi^1$ and $\psi^2$ are equivalent if $\psi_p^1= A^*(p) \psi_p^2 A(p)$ for some $\operatorname{GL}(n,\mathbb C)$-valued function $A(p)$.
\end{df}

We shall
write $\Bbb S\simeq \Bbb C^n$ for the standard fiber of the auxiliary bundle.
Notice,
that the purely imaginary part of the centre in $\GL(n,\Bbb C)$ acts on
$\Lambda^2_{\operatorname{skew-H}}(\Bbb C^n)$ trivially and so our 
$\g_0$ is the appropriate algebra to discuss here. 

Still, there is the $\Bbb
Z_2$ kernel of the action of the real multiples of identity matrices. But at the
group level, 
we shall work with $G=\SU(n,n)$ and so $G_0$ will be the group of all regular 
complex
matrices with real determinant. This means that we actually work with the
analogue to the choice of a spin structure on $M$ in the four-dimensional
case and our structure group $G_0$ is a double covering of the effective group
$G_0/\Bbb Z_2$ coming from the effective Klein geometry of the type in
question.

\subsection{The Fefferman construction for the homogeneous model}
Similar to the case of free distributions in \cite{DS09} and according to
the construction from Section \ref{sec:fefferman} we choose an embedding of
$\g = \frak s\frak u(n+1,n)$ to the $|1|$-graded Lie algebra $\tilde {\g} =
\mathfrak{su}(n+1,n+1)$ $$ \begin{pmatrix} A & X & Y \\
-{}Z^* & 2\alpha & -{}X^* \\
T & Z & -{}A^*
\end{pmatrix}
\mapsto 
\begin{pmatrix}
A & \frac{1}{\sqrt{2}} X & \frac{1}{\sqrt{2}} X & Y \\
- \frac{1}{\sqrt{2}} Z^* & \alpha & \alpha & - \frac{1}{\sqrt{2}} X^* \\
- \frac{1}{\sqrt{2}} Z^* & \alpha & \alpha & - \frac{1}{\sqrt{2}} X^* \\
T & \frac{1}{\sqrt{2}} Z & \frac{1}{\sqrt{2}} Z & -A^*
\end{pmatrix}
$$
where $A, Y, T\in\Mat_{\ell}(\C)$, $X,Z\in \C^\ell$, 
$Y+Y^*=T+T^*=0$, $\alpha = -i\operatorname{Im}\operatorname{Tr}A$.

Now, $\tilde {\frak p}$ is the parabolic subalgebra corresponding to the
only $|1|$-graded geometry as discussed above.  
Similar to the hypersurface CR case, the preimage $\frak q$ 
of $\tilde {\frak p}$ is
nearly the entire $\frak p$, with just one dimension in the centre $\Bbb C$
of $\frak g_0$ lacking.

The general functorial construction of the Fefferman space works in the
homogeneous model as follows.
We first consider the quotient of $G=\SU(n+1,n)$ by all of the $P$ but the
centre of $G_0$. This will provide a complex line bundle $\mathcal E(1,0)$
associated to the action of central elements $0\ne z\in \Bbb C$, $s\mapsto
z\cdot s$. Notice, we have adopted the same convention for weights as in the
hypersurface CR case, where $\mathcal E(a,b)$ stays for central action
$s\mapsto z^a\bar z^b\cdot s$.

Now, write $Q$ for the preimage of $\tilde P$ in
the embedding. Clearly, the 
requested space $G/Q$ is obtained by factorizing the action of the
real part of the centre on $\mathcal E(1,0)$ and 
thus $G/Q$ can be identified with the 
bundle of lines in $\mathcal E(1,0)$. 
This provides the circle bundle $\tilde M$, 
exactly as in the CR hypersurface case.

Moreover, the fixed grading of the Lie algebra $\g = \frak s\frak u(n+1,n)$
is well understood via the identification of $\g$ with the skew-Hermitian
matrices over the standard representation $\Bbb C^{n+2}$ and its splitting
into $\g_0$-submodules $\Bbb C^n \oplus \Bbb C \oplus \Bbb C^n$. This in
turn provides the identification of $\g/\frak q = \g_{-1}\oplus i\,\Bbb R$ with
skew-Hermitian forms on $\Bbb C^n\oplus \Bbb C$ and, thus, the requested
identification of the tangent bundle $T\tilde M$ with skew-Hermitian forms
on the auxiliary vector bundle with standard fibre $\Bbb C^{n+1}$.  

\subsection{The explicit construction in the curved case}
If we start with the normal Cartan connection $\om$ associated with a free
CR distribution on a manifold $M$ and 
the complex line bundle $\mathcal E(1,0)$ over $M$, then the categorial
construction provides the quotient manifold $\tilde M = \mathcal G/Q$, i.e. the
bundle of real lines in $\mathcal E(1,0)$, exactly as in the flat case. In
fact, we do not need the full Cartan bundle, since $\mathcal E(1,0)$ is a well
defined quotient bundle of the complex frame bundle $\mathcal G_0 = \mathcal
G/P_+$ of the
defining distribution $\mathcal D\subset TM$, at least locally. But
we have to be more careful with the requested $\frak s\frak u(n+1, n+1)$
geometry there.

Of course, we want to build the Fefferman circle bundle $\tilde M\to M$
including the required $|1|$-graded parabolic geometry
with the minimal effort straight from the free CR distribution itself.  
The key is the standard tractor calculus now.

Remember that the \emph{standard tractor bundle} is $\mathcal V M = \G
\times_P \Bbb V$, where $\Bbb V$ is the standard $SU(n+1,n)$ representation. 
The filtration $\Bbb V=\Bbb V^0\supset \Bbb V^1\supset \Bbb V^2\supset {0}$ 
induces the 
filtration of the tractor bundle 
$\mathcal V M =  \mathcal V^0M\supset \mathcal V^1M\supset \mathcal V^2M \supset 0$
on $\mathcal V M$ with codimensions $n+1$ and $n$. 

The standard representation carries a natural Hermitian form with respect
to which the adjoint representation is identified with the space of
skew-Hermitian maps on $\Bbb V$.  
As a $G_0$ module, the standard representation splits as 
$
\Bbb V = \Bbb C^n \oplus
\Bbb C
\oplus \Bbb C^n
$ 
and there is the $P$-module  
$
\Bbb S=\Bbb V/\Bbb V^2$, which decomposes as $\Bbb S = \Bbb C\oplus \Bbb C^n
$ and is dual to $\Bbb V^1$ as $G_0$-module.

Finally, the standard fiber of the tangent space to the circle bundle
$\tilde M$ is $\g_-\oplus i\,\Bbb R$ and it can be identified with 
a subspace of complex linear mappings $\Bbb V^1\to \Bbb S$. 
Exploiting the duality of $\Bbb V^1$ and $\Bbb S$ as $G_0$-modules, 
we may view them as skew-Hermitian 2-forms
on $\Bbb V^1$ as soon as we know the $G_0$-invariant splittings of $\Bbb
V^1$ and $\Bbb S$. 
By the very definition, this $G_0$-invariant identification 
is compatible with the inclusion $G\to \tilde G$.
Moreover, since the entire adjoint representation is identified as the space
of skew-Hermitian maps on $\Bbb V$, the definition of the $\frak s\frak u(n+1,n+1)$ geometry on
$\tilde M$ does not depend on the choice of the $G_0$ reduction. Thus, the identification is carried over to the level of the appropriate associated bundles and we may identify the tangent bundle to the
circle bundle $\tilde M$ with the skew-Hermitian second tensor product of
the auxiliary bundle $\mathcal S$, which is the definition of the
$|1|$-graded geometry introduced above.
Let us finally notice that the splitting of the auxiliary bundle 
$\mathcal S$ necessary for our identification is obtained
after the first prolongation already.

The necessary covering of the Cartan bundle for the free CR geometry 
by the $SU(n+1,n)$
geometry, ensuring the existence of the standard tractor
bundle $\mathcal VM$ and thus also $\mathcal S$, is 
clearly equivalent to the existence of the 
complex line bundle $\mathcal E(1,0)$. 

But for embedded free CR-manifolds this line bundle can be 
constructed similarly  to
the hypersurface case: There is the trivial canonical bundle $\mathcal K$
defined as the $(n+n^2)$-exterior power of the annihilator of the holomorphic
vectors in the complexified tangent bundle. Clearly the centre in $G_0$ acts
by the power $z^{-n-2n^2}$ on its standard fibre, 
and so we can take the appropriate root and consider
the dual space.

Having done this, we define the Fefferman space $\tilde M$ as the circle
bundle obtained by real projectivization of the complex line bundle $\mathcal
E(1,0)$. At the same time, the data from the first prolongation provide the
identification of the tangent bundle of $\tilde M$ with the bundle of
skew-Hermitian forms  $\Lambda^2_{\operatorname{skew-Herm}}\mathcal S$.
This concludes the geometric construction of the Fefferman
space, including its $|1|$-graded parabolic geometry structure.

\subsection{Concluding remarks and conjectures}
In general, the canonical normal Cartan connection $\tilde \om$ on 
the Fefferman
space $\tilde M$ 
does not need to be the one induced from the original connection $\om$ by 
the functorial construction. In such a case, they both define the same
underlying structure and so the difference is given by an adjoint tractor valued
1-form of positive homogeneity.

The functorial construction of the Cartan connection $\tilde \om$ links the
two curvatures $\kappa$ and  $\tilde\kappa$ in a simple algebraic way. In
principle, $\tilde \kappa = i'\circ \kappa$, where we view the curvatures as the
adjoint tractor valued two-forms on $\tilde M$ and $i'$ is induced
by the fixed Lie algebra
homomorphism $\frak s\frak u(n+1,n) \to \frak s\frak u(n+1,n+1)$. 

Moreover, we may use the same Lie algebra morphism $i'$ to rewrite this
formula at the level of cochains as a linear map 
$$
\phi : \Lambda^k\frak p_+\otimes \frak g \to \Lambda^k\tilde{\frak p}_+\otimes\tilde
{\frak g}
$$
and the question becomes {\it 
`under which conditions is the image $\phi(\kappa)$ a
$\tilde \partial^*$-closed element for a $\partial^*$-closed element
$\kappa$?'}.
  
These quite tedious computations were performed 
in the free rank $n$ distribution case in \cite{DS09} and they can be
performed with very minor extensions in our case. The conclusion reads:

\medskip
\noindent {\bf Claim.}
The Fefferman extension of a free CR-geometry to $|1|$-graded geometry on
the circle bundle $\tilde M$ is normal if and only if the $P$-invariant
restriction $\kappa_{1,1}$ of the entire curvature $\kappa$ to both
arguments in the complex distribution $\mathcal D\subset TM$ vanishes identically.

\smallskip
The recent results in the parabolic geometry, based on the detailed
understanding of the algebraic structure of the cohomology governing the
curvature, allow us to go much further (see \cite{CS09} for detailed
treatment). In particular, 
the general BGG machinery implies that the entire curvature is given by
a natural linear operator applied to the harmonic part of the curvature.
Since our projection to the component $\kappa_{1,1}$ is invariant too, every
projection of $\kappa_{1,1}$ to an irreducible component would be an invariant
operator with values in an invariant subbundle of two-cochains. But 
Kostant's version of the Bott-Borel-Weil theorem (cf. \cite{CS09}) 
guarantees that all the harmonic components appear in the entire 
space of cochains with multiplicity one. At the same time, there are only
linear operators mapping bundles coming from representations in 
second cohomologies into higher
cohomologies. Thus, such operators cannot exist as linear operators without
curvature in their symbols. 

Next, the Bianchi identity relates the differential and the 
fundamental derivative 
$$
\partial \kappa = \sum_{\operatorname{cycl}}i_{\kappa}\kappa - D\kappa 
$$ 
and employing $\partial^*\kappa=0$ we obtain for the lowest available
nontrivial homogeneity component $\square\kappa_{1,1}^0$ the appropriate
projection of
$$
\partial^* \sum_{\operatorname{cycl}}i_{\kappa}\kappa.
$$

Let us notice, that the homogeneity two obstruction is just a quadratic
tensorial expression. If this quantity vanishes, the next homogeneity
will become algebraic too, etc.
The detailed understanding of the normality obstruction in terms of the
harmonic curvature will require much more effort and it will be treated
elsewhere.

Let us 
conclude with an example of nontrivial geometry satisfying the above
normality condition for the Fefferman space. We adapt an example worked out
for the spinorial geometry by Stuart Armstrong, see \cite{Ar}. 

The $2n+n^2$-dimensional flat free CR manifold $Q$ can be described in coordinates $\{z_j, w_{kl}\}$ with $1\le j\le n$, $1\le k\le l\le n$, where $z_j, w_{kl} \in\mathbb C$ and $\Re w_{kk}=0$ for $1\le k \le n$ by the $D^{(1,0)}$ vector fields
$$Z_j= \frac{\partial}{\partial z_j} - \sum_{p=j}^n \bar{z}_p \frac{\partial}{\partial w_{jp}}.$$
Then
\begin{align*}
W_{kk}&=[Z_k,\bar{Z}_k]= \frac{\partial}{\partial w_{kk}} - \frac{\partial}{\partial \bar{w}_{kk}}&\\
W_{kl}&=[Z_k,\bar{Z}_l]= \frac{\partial}{\partial w_{kl}} & &\text{ if } k<l\\
W_{kl}&=[Z_k,\bar{Z}_l]= - \frac{\partial}{\partial \bar{w}_{lk}} &&\text{ if } k>l. 
\end{align*}

For $n\ge 4$ we modify $Q$ by replacing $Z_1$ by $$Z_1'=Z_1 +
\bar{w}_{12} \frac{\partial}{\partial w_{34}}.$$ Notice that $[Z_1',Z_j]=0$
for $2\le j\le n$, hence the modified CR structure is still integrable and
$[Z_1',\bar{Z}_j]= W_{1j}$.  The only resulting change in the structure
equations is that now $f^{[34]}_{1[12]}=1$.  It follows that the tensor $P$
is already trace-free for $A=B=0$, hence $A=B=C=0$ in this case and the only
non-vanishing coefficient in $P$ is $P^{[34]}_{1[12]}=1$.  Since the
curvature in homogeneity 1 is constant, by the Bianchi identity, the
curvature of higher homogeneity vanishes automatically.

\noindent
School of Science and Technology, 
University of New England, 
2351 Armidale, Australia
\\[2mm]
Department of Mathematics and Statistics,
Masaryk University, Kotl\'a\v rsk\'a 2,
611 37 Brno, Czech Republic

\end{document}